\newtheorem{theorem}{Theorem}[section]
\newtheorem{lemma}[theorem]{Lemma}
\newtheorem{proposition}[theorem]{Proposition}
\newtheorem{remark}[theorem]{Remark}
\numberwithin{equation}{section}
\def\N{\mathbb N}
\def\RR{\mathbb R}
\def\ds{\displaystyle}
\def\bs{\boldsymbol}
\def\R{{\boldsymbol{R}}}
\def\Q{{\boldsymbol{Q}}}
\def\O{{\boldsymbol{O}}}
\def\xl{\boldsymbol{x_l}}
\def\Eqp{\boldsymbol{S^+}}
\def\Eqm{\boldsymbol{S^-}}
\def\SEqp{\boldsymbol{D^+}}
\def\SEqm{\boldsymbol{D^-}}
\def\SEqpm{\boldsymbol{D^\pm}}
\def\stripe{\mathcal{S}}
\def\GU{$\boldsymbol{H^{u}}$\,}
\def\GS{$\boldsymbol{H^{s}}$\,}
\def\GA{$\boldsymbol{H_{0}}$\,}
\def\x{{\bf x}}
\def\eu{{\rm e}}
\DeclareMathOperator{\diver}{div}
\begin{document}
\title{On the structure of radial solutions for some quasilinear elliptic equations}
\author {Andrea Sfecci
\thanks{Dipartimento di Ingegneria Industriale e Scienze Matematiche,
Universit\`a Politecnica delle Marche, Via Brecce Bianche 12, 60131 Ancona -
Italy. Partially supported by G.N.A.M.P.A.}
}
  \maketitle
\pagestyle{myheadings}

\begin{abstract}
In this paper we study entire radial solutions for the quasilinear $p$-Laplace equation $\Delta_p u + k(x) f(u) = 0$ where $k$ is a radial positive weight and the nonlinearity behaves e.g. as $f(u)=u|u|^{q-2}-u|u|^{Q-2}$ with $q<Q$. In particular we focus our attention on solutions (positive and sign changing) which are infinitesimal at infinity, thus providing an extension of a previous result by Tang (2001).
\end{abstract}

\vspace{5mm} \textbf{Key Words: } supercritical equations, radial
solution, ground states, Fowler transformation, invariant manifold.\\
\textbf{MR Subject Classification}: 35J70, 35J10, 37J10.
   \vspace{5mm}

\section{Introduction}

In this paper we are going to discuss the structure of radial solutions of the following quasilinear elliptic equation
\begin{equation}\label{plap}
\Delta_p u(\x) + k(|\x|) f(u(\x)) = 0\,,
\end{equation}
where $\Delta_p u = \diver ( |\nabla u|^{p-2} \nabla u)$ is the so-called $p$-Laplace operator, $\x\in\RR^n$ with $n>p>1$, $k:(0,+\infty) \to \RR$ and $f:\RR\to\RR$ are $C^1$-functions.
Therefore, we will consider the following ordinary differential equation
\begin{equation}\label{plap.rad}
(r^{n-1} u'|u'|^{p-2})'  + r^{n-1} k(r) f(u) = 0\,,
\end{equation}
where, with a little abuse of notation, we have set $u(r)=u(\x)$, with $r=|\x|$, and
 $'$ denotes the derivative with respect to $r$.

We assume the following hypotheses on the function $f$:
\begin{equation}\label{howisf} \tag{\bf F}
f(u)=u|u|^{q-2} b(u)\,, \text{ with }
\begin{cases}
q>2\,, b\in C^1(\RR)\,, \\
b(u)>0 \text{ in } (-d^-,d^+)\,, & d^\pm>0\,, \\
 b(-d^-)=b(d^+)=0\,.
\end{cases}
\end{equation}
Notice that if $f(u)=u|u|^{q-2}-u|u|^{Q-2}$, with $Q>q>2$, then \eqref{howisf} is fulfilled.

The following conditions on the weight $k$ are required:
\begin{equation}\label{howisk} \tag{\bf K}
k(r)= h(r) r^\delta>0 \,, \text{ with }
\begin{cases}
h>0\,, \delta>-p\,, \\
h_0 := \lim_{r\to 0} h(r) \in (0,+\infty)\,, \\
h_\infty := \lim_{r\to \infty} h(r) \in (0,+\infty) \,, \\
\limsup_{r\to 0} h'(r)r <+\infty\,, \\
 \lim_{r\to \infty} h'(r) r^{1+\varpi} = 0\,, \quad \varpi>0\,.
\end{cases}
\end{equation}

\bigbreak

The structure of radial solutions of \eqref{plap} in the case of power-type nonlinearities $f(u)=u|u|^{q-2}$ and $k\equiv 1$ is strictly related to the following constants
$$
p_*=p \frac{n-1}{n-p}\,,\qquad  p^*=\frac{np}{n-p}\,,
$$
which are respectively known as the Serrin and the Sobolev critical exponents. Such values change when we consider a non-constant weight of the type $k(r)=r^\delta$, i.e. in the case of H\'enon equation (see e.g. \cite{BSS,BW,NiHenon}).
In this paper we will discuss the existence of solutions of \eqref{plap.rad} vanishing at infinity. In particular
we classify solutions in two classes depending on their behaviour at infinity: 
{\em fast decay solutions} which satisfy
$\lim_{r\to\infty} u'(r)r^{\frac{n-1}{p-1}}=\widetilde L\in \RR$,
 and {\em slow decay solutions} satisfying $\lim_{r\to\infty} u'(r) r^{\frac{n-1}{p-1}} =\infty$.
Notice that, setting $L=-\widetilde L \frac{n-p}{p-1}$, the formers satisfy also
$\lim_{r\to\infty} u(r) r^{\frac{n-p}{p-1}} =L\in \RR$. We will denote fast decay solutions by $v(r,L)$.
Moreover, we call {\em regular solutions} of \eqref{plap.rad}, the ones satisfying $u(0)=d\in\RR$, and we denote them by $u(r,d)$.

The problem of existence of radial solutions for equation \eqref{plap} presents a wide literature and different approaches. We address the interested reader to the following papers and the references therein. In \cite{AlaQua,DolFlo,FQT,KYY,Tang00,Tang01}, different situations with $f(u)=u|u|^{q-2}$ are considered, while the case of a {\em sign-changing} nonlinearity as $f(u)=u|u|^{Q-2}-u|u|^{q-2}$, with $Q>q$, is treated in \cite{AccPuc,CDGHM,CGHH2015,CGHY2013,SerTang}.
In \cite{DaDu,KMPT,Tang00,Tang01} nonlinearities of the type $f(u)=u|u|^{q-2}-u|u|^{Q-2}$, with $Q>q$, are considered.  
The present paper will focus on this kind of nonlinearities providing a generalizations of \cite{Tang01}. We will enter in such details below.

\medbreak

If we introduce the following change of variable, which reminds a Fowler-type transformation borrowed from \cite{BidVer} (see also \cite{BaFloPino2000POIN,Fow31,F2010JDE}), 
\begin{equation}\label{FowT}
\begin{cases}
 x_l(t)=u(r)r^{\alpha_l}\\
 y_l(t)=u'(r)|u'(r)|^{p-2}r^{\beta_l}\\
\end{cases}
\qquad
r=e^t
\end{equation}
with $l>p_*$, where
\begin{equation*}
\alpha_l=\frac{p}{l-p}>0, \qquad \beta_l = (\alpha_l+1)(p-1),
\qquad \gamma_l =\beta_l - (n-1)<0, 
\end{equation*}
then equation \eqref{plap.rad} can be written in the form of a dynamical system which is not anymore singular:
\begin{equation} \label{sist} \tag{${\rm S}_l$}
\left( \begin{array}{c} \dot{x_l} \\\dot{y_l}  \end{array}\right)
= \left( \begin{array}{cc} \alpha_l & 0 \\ 0 & \gamma_l
\end{array} \right)
\left( \begin{array}{c} x_l \\ y_l  \end{array}\right) +\left(
\begin{array}{c} y_l |y_l|^{\frac{2-p}{p-1}} \\
- g_l(x_l,t) \end{array}\right)
\end{equation}
where $g_l(x_l,t)=k(\eu^t) \, f(x_l\eu^{-\alpha_l t}) \, \eu^{\alpha_l(l-1)t}$.
In order to ensure uniqueness of the solutions, we will assume in the whole paper $p\in(1,2]$.
Such a restriction is just technical, we assume it to avoid cumbersome technicalities (cf. \cite{F2009AMPA,F2010JDE}). 
We will see how the research of regular fast decay solutions of \eqref{plap.rad} corresponds to the research of homoclinic trajectories of system \eqref{sist}, see Remark \ref{corrisp} below.

\medbreak

Here is the main result of this paper. In the statement, we present the result for regular solutions which are positive near zero (+) and the symmetric situation for solutions which are negative near zero (--).

\begin{theorem}\label{main}
Consider the differential equation \eqref{plap}.
Assume \eqref{howisk}, \eqref{howisf}. If $l:=p\frac{q+\delta}{p+\delta}>p^*$, then
\begin{description}
\item[(+)] there exists an increasing sequence $(A_k)_{k\geq0}$ of positive numbers, such that $u(r,A_k)$ is a regular fast decay solution with $k$ non degenerate zeros. In particular $u(r,A_0)$ is a regular positive fast decay solution.
Moreover, $u(r,d)$ is a regular positive slow decay solution for any $0<d<A_0$, and there is $A^*_k \in [A_{k-1},A_k)$ such that
 $u(r,d)$ is a regular slow decay solution with $k$ nondegenerate zeros whenever $A^*_k<d<A_{k}$, for any $k \ge 1$.
\item[(--)] there exists an increasing sequence $(B_k)_{k\geq0}$ of positive numbers, such that $u(r,-B_k)$ is a regular fast decay solution with $k$ non degenerate zeros. In particular $u(r,-B_0)$ is a regular negative fast decay solution.
Moreover, $u(r,-d)$ is a regular negative slow decay solution for any $0<d<B_0$, and there is $B^*_k \in [B_{k-1},B_k)$ such that
 $u(r,-d)$ is a regular slow decay solution with $k$ nondegenerate zeros whenever $B^*_k<d<B_{k}$, for any $k \ge 1$.
\end{description}
\end{theorem}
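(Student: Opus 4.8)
The plan is to study system \eqref{sist} with the critical exponent $l = p\frac{q+\delta}{p+\delta}$, which by hypothesis satisfies $l>p^*>p_*$, so that the Fowler transformation \eqref{FowT} is available. First I would analyze the asymptotic autonomous systems obtained as $t\to-\infty$ and $t\to+\infty$: since $h_0,h_\infty\in(0,+\infty)$ and because $l$ is chosen precisely so that the weight $r^\delta$ is absorbed into the rescaling, the nonlinearity $g_l(x_l,t)$ converges (locally uniformly in $x_l$) to autonomous limits $g_0(x_l)=h_0\,|x_l|^{q-2}x_l\,b(0)\cdot(\dots)$-type expressions as $t\to\mp\infty$ — more precisely, near $x_l=0$ the limit systems are the autonomous Fowler systems associated to the pure power $u|u|^{q-2}$ with weights $h_0$ and $h_\infty$. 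The hypotheses on $h'$ in \eqref{howisk} (the $\limsup_{r\to0}h'(r)r<\infty$ and $\lim_{r\to\infty}h'(r)r^{1+\varpi}=0$ conditions) are exactly what is needed to make these limits \emph{fast enough} for the stable/unstable manifold theory for non-autonomous perturbations of autonomous systems to apply, giving well-defined stable manifold \GS at $+\infty$ and unstable manifold \GU at $-\infty$ for the origin $\O$.

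Next I would set up the correspondence dictionary (the ``Remark \ref{corrisp}'' alluded to in the excerpt): regular solutions $u(r,d)$ of \eqref{plap.rad} correspond to trajectories of \eqref{sist} lying on the unstable manifold \GU of the origin (because $x_l(t)=u(r)r^{\alpha_l}\to0$ as $t\to-\infty$ when $u(0)=d$ is finite and $\alpha_l>0$), with the value $d>0$ parametrizing a branch of \GU; fast decay solutions correspond to trajectories on the stable manifold \GS of the origin (the condition $u(r)r^{(n-p)/(p-1)}\to L$ translates, via $\alpha_l<(n-p)/(p-1)$ since $l>p^*$, into $x_l(t)\to0$ as $t\to+\infty$); and regular fast decay solutions are therefore homoclinic trajectories, i.e.\ intersections $\GU\cap\GS$. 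The sign-changing count is read off from the number of times a trajectory crosses the $y_l$-axis $\{x_l=0\}$, and ``non-degenerate zero'' means the crossing is transversal, i.e.\ $\dot x_l\ne0$ there, equivalently $u'(r)\ne0$, which follows from the uniqueness guaranteed by $p\in(1,2]$.

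The core of the argument is then a rotation/shooting analysis in the $(x_l,y_l)$ phase plane. I would track the branch of \GU emanating from $\O$ into the region $x_l>0$, parametrized by $d\in(0,\infty)$, and follow it forward in $t$. Using the structure of $f$ from \eqref{howisf} — positivity of $b$ on $(-d^-,d^+)$ and the vanishing at the endpoints — together with an energy functional adapted to the non-autonomous system (a Pohozaev-type quantity whose $t$-derivative has a controlled sign thanks to the monotonicity built into \eqref{howisk} via the $h'$ conditions), I would show: (i) for $d$ small the trajectory stays near $\O$ and spirals/decays onto \GS, giving slow decay with no zeros (or eventually, as $d$ increases past thresholds $A_k^*$, with $k$ zeros) while the genuine homoclinics occur at the discrete values $A_k$ where the \GU-branch meets \GS exactly after $k$ half-turns; (ii) the ``winding number'' (number of intersections with $\{x_l=0\}$) of the \GU-trajectory is a monotone, and as I will argue unbounded, function of $d$ — monotonicity from continuous dependence and the transversality of axis crossings, unboundedness because for large $d$ the solution enters the region where $b$ changes sign and the orbit is forced to oscillate more and more (this is where the sign-changing nonlinearity, i.e.\ the $-u|u|^{Q-2}$ term, does its work, confining and rotating the orbit). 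The increasing sequences $(A_k)$ and $(B_k)$ are then extracted by an intermediate-value argument on the winding number along the \GU-branch, with the $\pm$ symmetry handled by the odd symmetry of the system under $(x_l,y_l)\mapsto(-x_l,-y_l)$ when one also reflects $b$ appropriately, or simply by repeating the analysis in $x_l<0$.

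I expect the main obstacle to be step (ii): proving that the winding number along the unstable branch is genuinely \emph{unbounded} as $d\to\infty$ (rather than saturating at some finite value), and simultaneously controlling the behaviour near the ``boundary'' values $x_l=d^\pm e^{-\alpha_l t}$ where $b$ vanishes — one must rule out the trajectory stalling at an equilibrium-like configuration or escaping to infinity in finite $t$. This requires a careful a priori bound showing the orbit cannot leave a suitable compact region of the phase plane (uniformly in $t$ on the relevant range), for which the energy/Pohozaev functional together with the sign conditions on $h'$ from \eqref{howisk} is the essential tool; the supercriticality $l>p^*$ enters precisely in guaranteeing the right sign of the boundary terms so that fast-decay (homoclinic) solutions can exist at all.
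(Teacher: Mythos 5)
Your setup is consistent with the paper: the choice $l=p\frac{q+\delta}{p+\delta}$, the role of the $h'$ conditions in \eqref{howisk} as the decay rates needed to build non-autonomous stable/unstable manifolds (\GS and \GU), and the dictionary regular solution $\leftrightarrow$ unstable manifold, fast decay $\leftrightarrow$ stable manifold, regular fast decay $\leftrightarrow$ homoclinic, are all exactly what the paper uses. The gap is in your step (ii), which is the heart of the proof. You propose to extract the sequence $(A_k)$ from a shooting argument in which the winding number of the trajectory on the unstable branch is a monotone and unbounded function of $d$, the unboundedness coming from solutions with large $d$ entering the region where $b$ changes sign and being ``forced to oscillate more and more''. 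Neither claim holds here. Monotonicity of the winding in $d$ does not follow from continuous dependence plus transversality, and the paper explicitly allows $A_1^*>A_0$ (loss of uniqueness of the positive fast decay solution), which is incompatible with a globally monotone winding number. Unboundedness as $d\to\infty$ fails outright: the rotation in the phase plane is produced where $f(u)u>0$, i.e.\ on $(-d^-,d^+)$, and for $d\ge d^+$ the (truncated) nonlinearity is non-positive, so large-$d$ solutions do not oscillate at all. Indeed all the $A_k$ constructed in the paper lie in $(0,d^+)$: the constant solution $u\equiv d^+$ corresponds to the trajectory $\Eqp(t)=(d^+\eu^{\alpha_l t},0)$ sitting on the unstable branch with norm blowing up, and every nodal fast decay solution sits between $0$ and $d^+$ on that branch.

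The idea you are missing is that in the supercritical regime $l>p^*$ the unbounded rotation lives on the \emph{stable} manifold, not the unstable one. Since $\alpha_l+\gamma_l<0$, the nontrivial equilibria $\bs{P^\pm}$ of the limiting autonomous system are attractors, the Pohozaev-type energy $H_l$ decreases along the flow where it is nonnegative, and Poincar\'e--Bendixson forces $M^s$ to be an unbounded double spiral. The paper lifts everything to polar coordinates on a strip: for $\tau$ large, $W^s_l(\tau)$ is close to $W^s_l(+\infty)$ and its arcs $E_k(\tau)$ perform $N+1$ turns while staying in $\rho\le\chi_N$, whereas the unstable branch $F^+(\tau)$ is a path from the origin to $\SEqp(\tau)$ with $\rho=d^+\eu^{\alpha_l\tau}>\chi_N$; a Jordan-type crossing argument (the path cannot leave the nested regions $\Gamma_k(\tau)$ through $\theta=\pi/2$ or $\rho=0$ because of the flow conditions \eqref{flow.x.axis} and \eqref{flow.y.axis}) then yields one intersection $A_k$ per spiral arm, with no monotonicity needed. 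The slow decay solutions are the sub-arcs of $F^+(\tau)$ trapped between consecutive spiral arms, which are forced to converge to $\bs{P^\pm}$ --- not to ``spiral/decay onto \GS'' as you write: a trajectory on the stable manifold would be a fast decay solution, while slow decay means convergence to $\bs{P^\pm}$. Finally, the zero count requires an angle-correspondence lemma relating the angle swept along the manifold to the angle swept by the trajectory, and the passage back from the truncated to the original nonlinearity requires an a priori bound $-d^-<u<d^+$ obtained from an invariant-quadrant argument; your sketch acknowledges the need for a priori control but supplies neither of these steps.
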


Our main theorem partially extends a result obtained by Tang in \cite[Theorem 1]{Tang01}, where the author considers equation $\Delta_p u + f(u) = 0$, i.e. \eqref{plap} with $k(r)\equiv 1$. There, existence of radial ground states is obtained
assuming $f(0)=0$, $f(u)>0$ in an interval $(0,d^+)$ and $\Phi(u)=u f'(u)/f(u)$ non-increasing in the interval of positivity of $f$. The solutions provided by Tang in \cite{Tang01} correspond to regular solutions $u(r,d)$ with $d\in(0,A_0]$ in the statement of our main theorem above. 
In this paper we extend the discussion to nodal solutions, also introducing the weight $k$.
Notice that we do not require the monotonicity assumption on the function $\Phi$, so that we can consider, e.g., the case $f(u)= u^{q_1}+u^{q_2}-u^{q_3}$ with $q_1<q_2<q_3$, but we can have $A_1^*>A_0$ in the statement of Theorem \ref{main}, which means that
 we can loose the uniqueness of the positive fast decay solution ensured by the assumption on $\Phi$, cf. \cite[Theorem 2]{Tang01}.
We underline that, in order to prove our result, the adopted techniques are completely different.

As a final remark, we recall that the case of equation \eqref{plap}, where \eqref{howisf} and\eqref{howisk} hold with $\delta=0$,
 has been investigated in \cite{DaDu} in the case $l=q<p^*$. Notice that, in Theorem \ref{main} we can consider also an {\sl apparently subcritical} situation with $q<p^*$ and $\delta<0$ such that $l=p\frac{q+\delta}{p+\delta}>p^*$.

\medbreak

The paper is organized as follows. In the next section we are going to introduce the main tools needed in order to prove our main theorem. The proof is based on the study of the invariant manifolds associated to the saddle-type equilibrium $(x,y)=(0,0)$ in \eqref{sist}.
In particular, in Section \ref{sub21} we draw the phase portrait of some systems \eqref{sist} which are autonomous, then in Section \ref{sub22}, using invariant manifold theory, we provide the needed background in the non-autonomous case. Section \ref{sec3} contains the proof of the main theorem. The proof is divided in five steps: in Section \ref{step1} we introduce a truncated problem, then in Sections \ref{step2} and \ref{step3} we prove respectively the existence of fast and slow decay solutions for this problem; in Section \ref{step4} we provide the estimates on the number of zeroes of such solutions, finally in Section \ref{step5} we prove that the solutions we have found are indeed solutions of the original problem.

\section{Introduction of invariant manifolds}

In the following subsections we will introduce the main tools we need in order to prove our main result. 

In this paper, we will denote by $\bs{x_l}(t,\tau,\Q)=(x_l(t,\tau,\Q),y_l(t,\tau,\Q))$ the trajectory of \eqref{sist} which is in $\Q$ for $t=\tau$, i.e. such that $\bs{x_l}(\tau,\tau,\Q)=\Q$.

Moreover, it is easy to verify that applying \eqref{FowT} with different values $l\neq L$ we get the following identities
\begin{eqnarray}
\label{xyll}
&&\bs{x_L}(t)= \bs{x_l}(t) \eu^{(\alpha_{L}-\alpha_l)t}\,,
\label{gll}
\\
&&g_{L}(x,t) = g_l(x \eu^{-(\alpha_{L}-\alpha_l)t},t) \eu^{[\alpha_{L}(L-1)-\alpha_l(l-1)]t}\,, 
\\
\label{Gll}
&&G_L(x,t)=G_l(x \eu^{-(\alpha_{L}-\alpha_l)t},t) \eu^{p(\alpha_L-\alpha_l)t}\,.
\end{eqnarray}
where $G_{l}(x,t) = \int_0^x g_{l}(\xi,t)\, d\xi$.

\subsection{The autonomous superlinear case}\label{sub21}

In this section we focus our attention on systems \eqref{sist} which are autonomous, i.e. such that $g_l(x,t)=g_l(x)$. 
In particular we recall some of the results contained in \cite{F2010JDE}. 
 We refer also to \cite{DalFra,FraSfe1} 
 for the picture in the classical case $p=2$.
We assume the following hypotheses on the nonlinearity $g_l$.
\begin{description}
  \item[\GA] There is $l>p_*$
   such that $g_l(x,t) \equiv g_l(x)$ is $t$-independent, satisfying
  $$  \lim_{x \to 0} \frac{g_l(x)}{x} = 0 
  \qquad \text{and}\qquad
  \lim_{|x| \to +\infty} \frac{g_l(x)}{x}=+\infty\,. $$
  Moreover $g_l(x)/|x|^{p-1}$ is an increasing function, positive for $x>0$.
\end{description}  


\begin{remark}
It can be verified, with a short computation, that if $k(r)= k_0 r^\delta$, with $k_0\in\RR$ and $f(u)=u|u|^{q-2}$ then, setting $l=p \, \frac{q+\delta}{p+\delta}$, we obtain $g_l(x,t)=x|x|^{q-2}$ which satisfies \GA. In the case of a constant function $k(r)=k_0$ we get $l=q$.
\end{remark}

Assume \GA and fix the corresponding $l>p_*$ in \eqref{sist}. The origin $\O=(0,0)$ is a saddle and admits a
 $1$-dimensional unstable manifold $M^u$ and a $1$-dimensional stable manifold $M^s$.
  Moreover, by \GA,  which implicitly gives
 $$
\lim_{x \to 0} \frac{g_l(x)}{x|x|^{p-2}} = 0 
  \qquad \text{and}\qquad
  \lim_{|x| \to +\infty} \frac{g_l(x)}{x|x|^{p-2}}=+\infty\,,
$$
we have two non-trivial critical points $\boldsymbol{P^+} =(P_x^+,P_y^+)$ with $P_x^+>0>P_y^+$
  and $\boldsymbol{P^-} =(P_x^-,P_y^-)$ with $P_x^-<0<P_y^-$.
 In particular $P_x^\pm$ solves $g_l(x)=x|x|^{p-2} \alpha_l^{p-1} |\gamma_l|$ and $P_y^\pm = \mp \left(\alpha_l |P_x^\pm|\right)^{p-1} $.

  They are stable for
 $l>p^*$,   centers for $l=p^*$ and unstable for $p_*<l<p^*$.
In particular,
\begin{equation}\label{alfagamma}
\alpha_l + \gamma_l \quad
\begin{cases}
> 0 & \text{ if } p_*<l<p^*\,,
\\
= 0 & \text{ if } l=p^*\,,
\\
< 0 & \text{ if } l>p^*\,.
\end{cases} 
\end{equation}

We define the following energy function
\begin{equation*}
H_{l}(x,y) = \frac{n-p}{p}\, xy + \frac{p-1}{p}\, |y|^{\frac{p}{p-1}} + G_{l}(t,x)\,,
\end{equation*}
which is strictly related to the Pohozaev function
\begin{equation*}
\mathcal{P}(u,u',r) = r^n \left[ \frac{n-p}{p} \, \frac{u\, u' \, |u'|^{p-2}}{r} + \frac{p-1}{p} \, |u'|^p + F(u,r)\right] 	,
\end{equation*}
where $F(u,r)= \int_0^u f(\upsilon,r) \,d\upsilon$. 
In fact we have
\begin{equation}\label{Po-rel}
\mathcal{P}(u(r),u'(r),r) = H_{p^*} ( x_{p^*}(t),y_{p^*}(t),t)= H_l(x_l(t), y_l(t), t) \eu^{-(\alpha_l+\gamma_l)t} \,.
\end{equation}
A computation gives
$$
\frac{d}{dt}H_{p^*}(x_{p^*}(t),y_{p^*}(t)) = \frac{\partial}{\partial t} G_{p^*}(t,x_{p^*}(t))\,,
$$
If \GA holds with $l=p^*$, then $G_{p^*}$ is independent of $t$, so that (${\rm S}_{p^*}$) is a Hamiltonian system and the unstable and the stable manifolds coincide: we have the existence of two homoclinc trajectories (see Figure \ref{figHham} for the phase portrait in this case). Moreover we can compute $H_{p^*}(\O)=0$ and $H_{p^*}(\bs{P^\pm})<0$.

\begin{figure}[t]
\centerline{\epsfig{file=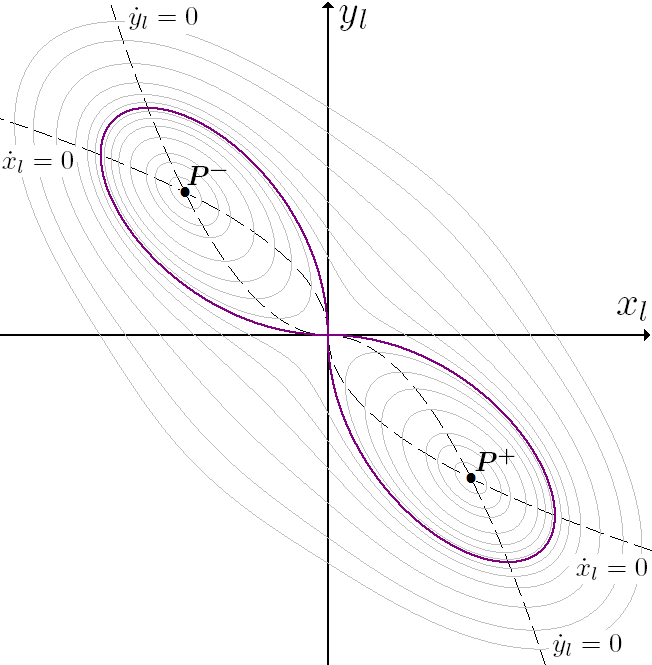, height = 6 cm}}
\caption{If system (${\rm S}_{p^*}$) is autonomous, then it is Hamiltonian. It presents periodic solutions and two homoclinic trajectories, contained in the curve $H_{p^*}=0$. Isoclines $\dot x_l=0$ and $\dot y_l=0$ are also drawn.}
\label{figHham}
\end{figure}

\medbreak 

Now, let us assume \GA with $l\neq p^*$.
By \eqref{Po-rel}, we obtain
\begin{multline}\label{Hl-est}
\frac{d}{dt} H_l(x_l(t), y_l(t)) = \frac{d}{dt} \left[ \eu^{(\alpha_l+\gamma_l)t} \, H_{p^*}(x_{p^*}(t),y_{p^*}(t))  \right]\\
= (\alpha_l+\gamma_l) H_l(x_l(t), y_l(t), t) + \eu^{(\alpha_l+\gamma_l)t} \frac{\partial}{\partial t} \, G_{p^*}(t,x_{p^*}(t)) \,.
\end{multline}
We stress that, in this case, (${\rm S}_{p^*}$) is non-autonomous.
We introduce the function $\mathcal G_l(x)= G_l(x)/x|x|^{p-1}$. 
Since $g_l(x)/|x|^{p-1}$ is increasing, then $\mathcal G_l$ is increasing too: indeed, one has $\mathcal G_l'(x)=\frac{1}{|x|^{p+1}}\left[ x g_l(x) - p\, G_l(x) \right]$ which is non-negative, since
$$
G_l(x) = 
\int_0^x \frac{g_l(s)}{|s|^{p-1}} |s|^{p-1} \,ds \leq \frac{g_l(x)}{|x|^{p-1}}  \int_0^x |s|^{p-1}\,ds = \frac{x \,g_l(x)}{p}\,.
$$
By \eqref{Gll}, we obtain
\begin{eqnarray*}
\frac{\partial}{\partial t} G_{p^*}(t,x)
&=& \frac{\partial}{\partial t} \left( G_l(x \eu^{-(\alpha_{p^*}-\alpha_l)t}) \eu^{p(\alpha_{p^*}-\alpha_l)t} \right) \\
&=& \frac{\partial}{\partial t} \left(\mathcal G_l(x \eu^{-(\alpha_{p^*}-\alpha_l)t}) x|x|^{p-1} \right) \,,
\end{eqnarray*}
which has the sign of $\alpha_l -\alpha_{p^*}$:
$$
\alpha_l -\alpha_{p^*} \ 
\begin{cases}
> 0 & \text{ if } p_*<l<p^*\,,
\\
< 0 & \text{ if } l>p^*\,.
\end{cases} 
$$
Hence, by \eqref{alfagamma} and \eqref{Hl-est},
$$
\frac{d}{dt} H_l(\xl(t)) \ 
\begin{cases}
> 0 & \text{ if } p_*<l<p^*\,
\\
< 0 & \text{ if } l>p^*
\end{cases} 
\qquad
\text{ when } H_l(\xl(t))\geq 0 \,.
$$

\medbreak

Let us consider the case $l>p^*$
 and fix $\Q\in\RR^2\setminus\{{\bs O}\}$. Suppose that $\lim_{t\to-\infty} \xl(t,0,\Q)=\O$ (in particular  $\lim_{t\to-\infty} H_{l}(\xl(t,0,\Q))=0$) then, by the previous computation, $H_{l}(\xl(t,0,\Q))<0$ for every $t\in\RR$ so that $\lim_{t\to+\infty} \xl(t,0,\Q)=\bs{P^\pm}$.
Conversely, if $\lim_{t\to+\infty} \xl(t,0,\Q)=\O$, since $\alpha_l+\gamma_l\neq 0$, thanks to the Poincar\'e-Bendixson criterion, $\xl$ is not a homoclinic and there are not heteroclinic cycles. Therefore, $\lim_{t\to-\infty} |\xl(t,0,\Q)|=+\infty$.

Arguing similarly, in the case $p_*<l<p^*$, if $\lim_{t\to+\infty} \xl(t,0,\Q)=\O$ then $\lim_{t\to-\infty} \xl(t,0,\Q)=\bs{P^\pm}$, while if we assume $\lim_{t\to-\infty} \xl(t,0,\Q)=\O$ then $\lim_{t\to+\infty} |\xl(t,0,\Q)|=+\infty$.

Finally, introducing polar coordinates, it is immediately verified that the angular velocity of the solutions is unbounded as $|\xl|\to+\infty$, so that, if $|\xl(t,0,\Q)| \to +\infty$ then the trajectory draws an infinite number of rotations around the origin having the shape of a spiral.

Hence we can draw the stable and unstable manifolds for the autonomous system \eqref{sist}, when \GA is satisfied with $l\neq p^*$, as in Figure \ref{fignoncrit}. In particular, when $l>p^*$, the stable manifold has the shape of an unbounded double spiral.

\begin{figure}[t]
\centerline{\epsfig{file=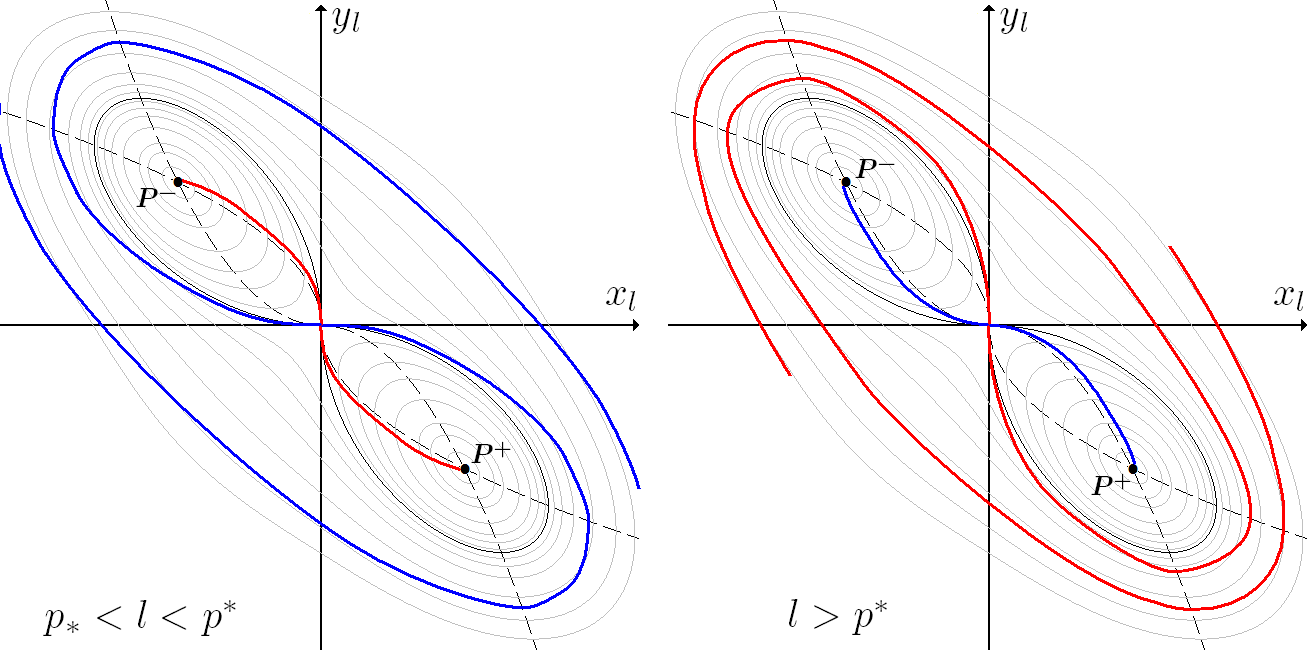, height = 5 cm}}
\caption{The unstable manifold and the stable manifold for autonomous system \eqref{sist} with $l\neq p^*$.}
\label{fignoncrit}
\end{figure}

The next remark underlines the correspondence between solutions of~\eqref{sist} converging to the origin and solutions of \eqref{plap.rad}.

 \begin{remark}\label{corrisp}
 Assume \GA. Consider the trajectory $\xl(t,\tau,\Q)$ of~\eqref{sist} and let $u(r)$ be the corresponding
solution of~\eqref{plap.rad}; then $u(r)$ is a regular solution if and only if $\Q \in M^u$, while it has fast decay if and only if
$\Q \in M^s$.
 \end{remark}
 
Such a result can be proved by standard arguments of invariant manifold theory, see e.g. \cite{DalFra,FraSfe1,FraSfe2}.
In fact $x_l(t) \propto \eu^{\alpha_l t}$, for $t\sim -\infty$, implies $u(r) \propto 1$ for $r\sim 0$ and
$y_l(t) \propto \eu^{\gamma_l t}$, for $t\sim +\infty$, implies $u'(r) \propto r^{-\frac{n-1}{p-1}}$ for $r\sim \infty$.
Moreover, the next remark gives the corresponding result for the non-trivial critical points $\bs{P^\pm}$.
 
\begin{remark}\label{corrP}
Assume \GA, with $l\neq p^*$. Consider the trajectory $\xl(t,\tau,\Q)$ of~\eqref{sist} and let $u(r)$ be the corresponding solution of~\eqref{plap.rad}.
Then, $u(r)$ is a slow decay solution if and only if $\lim_{t\to+\infty} \xl(t,\tau,\Q) = \bs{P^\pm}$.
Analogously, $u(r)$ is a singular solution if and only if
$\lim_{t\to-\infty} \xl(t,\tau,\Q) = \bs{P^\pm}$.
\end{remark}


\subsection{The non-autonomous case}\label{sub22}

In this section we provide the construction of invariant manifolds in the non-autonomous case.

The contents of this section collect only the hypotheses we need in the present paper, we refer to \cite{F2013DIE,FraSfe2} 
for an overview on this topic. 
In order to simplify the exposition we will treat the case $p\neq 2$ without further mentioning. The approach for the classical case $p=2$ gives a slightly different picture of the phase portrait since, in system \eqref{sist}, the term $y|y|^{\frac{2-p}{p-1}}=y$ is linear when $p=2$ (cf. \cite{DalFra,F2013DIE,FraSfe1}).

The following hypotheses on the function $g_l$ permit us to introduce stable manifolds for non-autonomous systems \eqref{sist}.

\begin{description}  
\item[\GS] Assume that there is $l>p_*$ such that
\begin{eqnarray*}
&& g_{l}(0,t)=\partial_x g_{l}(0,t) = 0\,, \qquad \text{ for any $t \in \RR$,}\\
&& \ds \lim_{t\to +\infty} g_{l}(x,t) = g^{+\infty}_{l}(x)
  \quad\text{and}\quad
  \lim_{t\to +\infty} \eu^{\varpi t} \, {\partial_t} \, g_{l}(x,t)=0
  \,,
\end{eqnarray*}
  uniformly on compact sets, where the function $g^{+\infty}_{l}$ is a non-trivial $C^1$ function satisfying \GA and $\varpi$ is a suitable positive constant.
\end{description}

Introducing a {\em time-type} variable $z= \eu^{-\varpi t}$, we obtain a $3$-dimensional autonomous system:
\begin{equation}\label{sist.na}
\left( \begin{array}{c}
\dot{x}_{l} \\
\dot{y}_{l} \\
\dot{z} \end{array}\right) = \left( \begin{array}{ccc} \alpha_{l} &
0 &0
\\ 0 & \gamma_{l} & 0 \\
0 & 0 & -\varpi
\end{array} \right)
\left( \begin{array}{c} x_{l} \\ y_{l} \\ z \end{array}\right) +\left(
\begin{array}{c} y_l|y_l|^{\frac{2-p}{p-1}} \\-
g_{l}(x_{l},-\ln(z)/\varpi)\\ 0\end{array}\right) .
\end{equation}
We have thus obtained an autonomous system in $\RR^3$ such that all its
 trajectories converge to the $z=0$ plane as $t \to +\infty$. Hence,~\eqref{sist.na} is useful to investigate
the asymptotic behavior of the solutions of \eqref{sist} in the future.
 Assume \GS. The origin admits a
$2$-dimensional stable manifold: we denote it by $\boldsymbol{W^s_l}$.
  From standard arguments of dynamical system
 theory, we see that  the set
 $W^s_{l}(\tau)=\boldsymbol{W^s_l} \cap \{ z= \eu^{-\varpi \tau} \}$ is a curve, for any $\tau \in \RR$,
 see e.g.~\cite{BaFloPino2000POIN,F2013DIE,FraSfe2,JPY}.
 
Let us denote by $W^s_l(+\infty)$ the stable manifold $M^s$ of the autonomous system \eqref{sist} where $g_l(x,t) \equiv g_l^{+\infty}(x)$. Then we have the following, cf. \cite{F2013DIE,JPY}.

\begin{remark}\label{allinfinitoWS}
  Assume \GS; then $W^s_{l}(\tau)$ approaches
  $W^s_{l}(+\infty)$ as $\tau \to +\infty$. 
  More precisely, if
  $W^s_{l}(\tau_0)$ intersects  transversally a certain line $L$ in a point $\Q(\tau_0)$ for $\tau_0 \in(-\infty,+\infty]$, then there is a neighborhood
  $I$  of $\tau_0$ such that $W^s_{l}(\tau)$ intersects $L$ in a point $\Q(\tau)$ for any $\tau \in I$, and $\Q(\tau)$ is continuous
  (in particular it is as smooth as $g_l$).
  \end{remark}
  
The proof is a consequence of standard facts in dynamical system theory (see e.g. \cite[\S 13]{CodLev} or \cite{JSell}). As a consequence we have the following characterization.
\begin{equation}\label{Ws}
    W^s_{l}(\tau):= \left\{ \Q \in \RR^2 \mid \lim_{t\to +\infty}  \boldsymbol{x_{l}}(t,\tau, \Q)=(0,0) \right\} \,.
\end{equation}

\medbreak

Arguing similarly it is possible to introduce unstable manifolds. For our purposes, we need to require a different behaviour for $g_l$ as $t\to-\infty$.

\begin{description}  
\item[\GU] Assume that there is $l>p_*$ such that
\begin{eqnarray*}
&& g_{l}(0,t)=\partial_x g_{l}(0,t) = 0\,, \qquad \text{ for any $t \in \RR$,}\\
&& \ds \lim_{t\to -\infty} g_{l}(x,t) = 0
  \quad\text{and}\quad
  \lim_{t\to-\infty} \eu^{-\varpi t} \, {\partial_t} \, g_{l}(x,t)=0
  \,,
\end{eqnarray*}
  uniformly on compact sets, where $\varpi$ is a suitable positive constant.
\end{description}

Arguing as above, denoting by $W^u_l(-\infty)$ the unstable manifold $M^u$ of the autonomous system \eqref{sist} where $g_l(x,t) \equiv 0$, we have the corresponding properties for the unstable manifold $\boldsymbol{W^u_l}$ and the curves $W^u_l(\tau) =\boldsymbol{W^u_l}\cap \{z=\eu^{\varpi\tau}\}$. Notice that, in this case, $M^u$ consists of the $x$ axis.

\begin{remark}\label{allinfinitoWU}
  Assume \GU; then $W^u_{l}(\tau)$ approaches
  $W^u_{l}(-\infty)$, i.e. the $x$ axis, as $\tau \to -\infty$. 
  More precisely, if
  $W^u_{l}(\tau_0)$ intersects  transversally a certain line $L$ in a point $\Q(\tau_0)$ for $\tau_0 \in[-\infty,+\infty)$, then there is a neighborhood
  $I$  of $\tau_0$ such that $W^u_{l}(\tau)$ intersects $L$ in a point $\Q(\tau)$ for any $\tau \in I$, and $\Q(\tau)$ is continuous
  (in particular it is as smooth as $g_l$).
  \end{remark}
  
As above, we can characterize the curves  $W^u_l(\tau)$ as follows:
\begin{equation}\label{Wu}
    W^u_{l}(\tau):= \left\{ \Q \in \RR^2 \mid \lim_{t\to-\infty} \boldsymbol{x_{l}}(t,\tau, \Q)=(0,0) \right\} \,.\\
\end{equation}

\medbreak

Let us list some properties of the manifolds we have introduced. See \cite{F2010JDE,F2013DIE,FraSfe2} 
for more details.

As in the autonomous case, we have the following correspondence between solutions of \eqref{plap.rad} and \eqref{sist}.

\begin{remark}\label{corr-non-aut} Assume $l>p_*$.
If $u(r,d)$ is a regular solution of \eqref{plap.rad}, then the corresponding trajectory $\xl(t)$ of \eqref{sist} satisfies $\xl(t)\in W^u_l(t)$ for every $t\in\RR$. Correspondingly, 
if $v(r,L)$ is a fast decay solution, then the corresponding trajectory satisfies $\xl(t)\in W^s_l(t)$ for every $t\in\RR$. Hence $\xl(t,\tau,\Q)$ is a regular fast decay solution of \eqref{plap.rad} if and only if $\Q\in W^u_l(\tau)\cap W^s_l(\tau)$.

Moreover, if a trajectory $\xl(t)$ of \eqref{sist} satisfies $\lim_{t\to+\infty} \xl(t) = \bs{P^\pm}$, then the corresponding solution $u(r)$ of \eqref{plap.rad} is a slow decay solution.
\end{remark}

The set $W^u_l(\tau)$ is tangent to the $x$-axis at the origin, while $W^s_l(\tau)$ is tangent to the $y$-axis at the origin (notice that, in the classical case $p=2$, the latter is tangent to the line $y+(n-2)x=0$).

The set $W^u_l(\tau)$ is split by the origin into two connected components, we will denote by $W^{u,+}_l(\tau)$ the one which
 leaves the origin and enters the $x>0$ semi-plane (corresponding to regular solutions which are positive for $r$ small), and by $W^{u,-}_l(\tau)$ the other which enters the $x<0$ semi-plane (corresponding to regular solutions
 which are negative for $r$ small). Similarly $W^{s}_{l}(\tau)$ is split by the origin into $W^{s,+}_{l}(\tau)$ and $W^{s,-}_{l}(\tau)$,
 which leave the origin and enter respectively in $x> 0$ and in $x< 0$ (corresponding to fast decay solutions 
 which are definitively positive and definitively negative respectively).

Following e.g. \cite[Lemma 4.2]{FraSfe2},
we introduce now some parametrizations of the manifolds. Fix $\tau\in\RR$ and consider the branch $W^{u,+}_l(\tau)$. For every $d>0$, there exists a point $\Q(d,\tau)\in W^{u,+}_l(\tau)$ corresponding to the regular solution $u(r,d)$ at $r=\eu^\tau$, i.e. $\Q(d,\tau)=(u(r,d)r^{\alpha_l},u'(r,d)|u'(r,d)|^{p-2} r^{\beta_l})$. Hence, we can find a parametrization $\Sigma_l^{u,+}(\cdot, \tau): (0,+\infty) \to \RR^2$ such that $\Sigma_l^{u,+}(d, \tau)=\Q(d,\tau)\in W^{u,+}_l(\tau)$. In particular, we have by construction that $\Sigma_l^{u,+}: (0,+\infty)\times \RR \to \RR^2$ is continuous. Similarly, we can introduce a continuous parametrization of $W^{s,+}_l(\tau)$, through the parameter $L$ associated to every fast decay solution $v(r,L)$, thus obtaining $\Sigma_l^{s,+}: (0,+\infty)\times \RR \to \RR^2$ such that $\Sigma_l^{s,+}(L,\tau)\in W^{s,+}_l(\tau)$ for every $L>0$ and $\tau\in\RR$. Again, $\Sigma_l^{u,-}$ and $\Sigma_l^{s,-}$ parametrize $W^{u,-}_l(\cdot)$ and $W^{s,-}_l(\cdot)$ respectively,  considering the regular solutions $u(r,-d)$ for every $d>0$ and the fast decay solutions $v(r,-L)$ for every $L>0$.

Moreover, we will consider the polar coordinates, associated to the sets $W^{u,\pm}_l(\tau)$ and $W^{s,\pm}_l(\tau)$ as follows:
 \begin{equation}\label{sigmap}
\begin{array}{lll}
\Sigma^{u,\pm}_{l}(d, \tau)& \!\!\!\!=  \rho^{u,\pm}_l(d, \tau) \big( \cos( \theta^{u,\pm}_l(d, \tau)), \sin( \theta^{u,\pm}_l(d, \tau))\big)\,, & d>0,\, \tau\in \RR,\\
\Sigma^{s,\pm}_{l}(L, \tau)& \!\!\!\!=  \rho^{s,\pm}_l(L, \tau) \big( \cos( \theta^{s,\pm}_l(L, \tau)), \sin( \theta^{s,\pm}_l(L, \tau))\big) \,, & L>0,\, \tau\in \RR.
\end{array}
\end{equation}
In particular we set for definiteness 
\begin{equation}\label{starting}
\begin{array}{ll}
\theta^{u,+}_l(0, \tau)=0 \,, & \theta^{u,-}_l(0, \tau)=-\pi \,, \\
\theta^{s,+}_l(0, \tau)=-\pi/2 \,, & \theta^{s,-}_l(0, \tau)=-3\pi/2 \,.
\end{array}
\end{equation}
Similarly we introduce polar coordinates associated to the solutions of \eqref{sist}:
\begin{equation}\label{xlpolar}
\xl(t,\tau,\Q) = \rho_l(t, \tau,\Q) \big( \cos( \phi_l(t, \tau,\Q)), \sin( \phi_l (t, \tau,\Q))\big)\,.
\end{equation}

\section{Proof of the main result}\label{sec3}

%
%
%

%

The proof is divided in five parts. At first we introduce a truncation of the nonlinearity $f$, and we prove the theorem for the truncated problem, introducing invariant manifolds and studying their shape. We show the existence of regular fast decay solutions looking for intersections between the unstable manifold and the stable one. The existence of regular slow decay solutions follows by topological arguments. Then, we discuss their nodal properties. Finally, we prove that all the solutions of the truncated equation are solutions of the original one, too.

\subsection{The truncated problem}\label{step1}
In order to ensure the continuability of the solutions of \eqref{plap.rad} we introduce the following truncation of the nonlinearity $f$:
\begin{equation}\label{trunc}
\bar f(u) =
\begin{cases}
f(u) & u\in (-d^-,d^+)\,, \\
0 & u\in (-\infty,-d^--1)\cup(d^++1,+\infty) \,,\\
 & \text{smooth and non-positive elsewhere}\,.
\end{cases}
\end{equation}
We will prove our main theorem for the truncated nonlinearity $\bar f$. Then, providing some a priori estimates, we will show that such solutions solve the original equation \eqref{plap}, too. Without loss of generality we assume
\begin{equation}\label{maxf}
\max_{\RR}|\bar f(u)| = f_\infty := \max_{u\in[-d^-,d^+]}|f(u)|\,,
\end{equation}


When we consider the truncation of $f$ introduced in \eqref{trunc}, applying the Fowler transformation \eqref{FowT} with $l=p\frac{q+\delta}{p+\delta}>p^*$, we obtain system \eqref{sist} with
\begin{equation}\label{truncg}
g_l(x,t)=
\begin{cases}
0 & \text{ if } x\eu^{-\alpha_l t}\leq -d^- \!-\!1 \,, \\
k(\eu^t) \, \bar f(x\eu^{-\alpha_l t}) \, \eu^{\alpha_l(l-1)t} & \text{ if } -d^- \!-\!1<x\eu^{-\alpha_l t}
<d^+ \!+\!1 \,,\\
0 & \text{ if } x\eu^{-\alpha_l t}\geq d^+ +1\,, 
\end{cases}
\end{equation}

\begin{lemma}
Assume the hypotheses of Theorem \ref{main}. The function $g_l$ in \eqref{truncg} satisfies both \GU and \GS with $l=p\frac{q+\delta}{p+\delta}$. 
\end{lemma}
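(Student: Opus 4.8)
The plan is to verify the four conditions in \GU and in \GS directly from the formula \eqref{truncg}, with $\varpi$ taken to be any positive number smaller than $\alpha_l$ and small enough that $h'(r)r^{1+\varpi}\to0$ as $r\to+\infty$ (possible, since this holds for the exponent $\varpi$ of \eqref{howisk} and hence for every smaller positive value). First I would record the elementary identities that drive everything. From $l=p\frac{q+\delta}{p+\delta}$ and $\delta>-p$ one gets $l-p=p\frac{q-p}{p+\delta}$, and since $l>p^*>p$ this forces $q>p$; moreover $\alpha_l=\frac{p}{l-p}=\frac{p+\delta}{q-p}>0$. The two exponent cancellations $\delta+\alpha_l(l-q)=0$ and $c:=\delta+\alpha_l(l-1)=\frac{(p+\delta)(q-1)}{q-p}=\alpha_l(q-1)>0$ are what make the statement work. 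The conditions at the origin are then immediate: $x\eu^{-\alpha_l t}=0$ lies in the middle branch of \eqref{truncg}, and since $\bar f=f$ near $0$ with $f(u)=u|u|^{q-2}b(u)$, $q>2$, we have $\bar f(0)=\bar f'(0)=0$, whence $g_l(0,t)=0$ and $\partial_x g_l(0,t)=k(\eu^t)\bar f'(0)\eu^{\alpha_l(l-2)t}=0$ for all $t$.

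For \GS, fix a compact $K\subset\RR$; since $\alpha_l>0$, for $t$ large one has $x\eu^{-\alpha_l t}\in(-d^-,d^+)$ for all $x\in K$, so there $\bar f=f$ and, using $\delta+\alpha_l(l-q)=0$, $g_l(x,t)=h(\eu^t)\,x|x|^{q-2}\,b(x\eu^{-\alpha_l t})$. Letting $t\to+\infty$, this converges uniformly on $K$ to $g_l^{+\infty}(x):=h_\infty\,b(0)\,x|x|^{q-2}$ (recall $b(0)>0$), a non-trivial $C^1$ function; it satisfies \GA, using $q>2$ for the two limits of $g_l^{+\infty}(x)/x$ and $q>p$ for the monotonicity of $g_l^{+\infty}(x)/|x|^{p-1}$. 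Differentiating, $\partial_t g_l(x,t)=x|x|^{q-2}\big[h'(\eu^t)\eu^t\,b(x\eu^{-\alpha_l t})-\alpha_l\,h(\eu^t)\,(x\eu^{-\alpha_l t})\,b'(x\eu^{-\alpha_l t})\big]$; multiplying by $\eu^{\varpi t}$, the first summand tends to $0$ because $\eu^{\varpi t}h'(\eu^t)\eu^t=h'(r)r^{1+\varpi}|_{r=\eu^t}\to0$, and the second because $\eu^{\varpi t}(x\eu^{-\alpha_l t})=x\eu^{(\varpi-\alpha_l)t}\to0$ uniformly on $K$ since $\varpi<\alpha_l$. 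This is exactly $\lim_{t\to+\infty}\eu^{\varpi t}\partial_t g_l(x,t)=0$ uniformly on compacta, so \GS holds.

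For \GU, the key observation is that both $g_l(x,t)$ and $\partial_t g_l(x,t)$ vanish unless $-d^--1<x\eu^{-\alpha_l t}<d^++1$; on that set $w:=x\eu^{-\alpha_l t}$ is bounded. Using $\|\bar f\|_\infty=f_\infty$ from \eqref{maxf}, the boundedness of $\bar f$ and of $w\mapsto w\bar f'(w)$ (as $\bar f'$ has compact support), the fact that $h(\eu^t)\to h_0$, and — from the bound $\limsup_{r\to0}h'(r)r<+\infty$ in \eqref{howisk} — that $k(\eu^t)$ and $\eu^t k'(\eu^t)$ are $O(\eu^{\delta t})$ as $t\to-\infty$, one estimates $|g_l(x,t)|\le C\eu^{ct}$ and $|\partial_t g_l(x,t)|\le C\eu^{ct}$ for $t\le t_0$, uniformly in $x\in\RR$. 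Since $c>0$ and $\varpi<\alpha_l<\alpha_l(q-1)=c$, it follows that $g_l(x,t)\to0$ and $\eu^{-\varpi t}\partial_t g_l(x,t)\to0$ as $t\to-\infty$, uniformly on compacta; together with the origin conditions already checked, this gives \GU. The only slightly delicate point in the whole argument is the control of $\eu^t k'(\eu^t)$ near $t=-\infty$ through \eqref{howisk}; everything else is a bookkeeping of the exponents above.
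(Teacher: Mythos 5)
Your proposal is correct and follows essentially the same route as the paper's proof: verify the vanishing at $x=0$, use the exponent identity $\delta+\alpha_l(l-1)=\alpha_l(q-1)$ to rewrite $g_l(x,t)=h(\eu^t)\,x|x|^{q-2}\,\bar b(x\eu^{-\alpha_l t})$, bound $g_l$ and $\partial_t g_l$ by $C\eu^{\alpha_l(q-1)t}$ as $t\to-\infty$ via \eqref{maxf} and $\limsup_{r\to0}h'(r)r<+\infty$, and pass to the limit $h_\infty b(0)x|x|^{q-2}$ as $t\to+\infty$ using $h'(r)r^{1+\varpi}\to0$ and $\varpi<\alpha_l$. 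Your version is slightly more explicit than the paper's (the sharper exponent $\alpha_l(q-1)$ in the $t\to-\infty$ bound, which makes the single choice $\varpi<\alpha_l$ work for both \GU and \GS, and the verification that the limit function satisfies \GA), but these are refinements of the same argument.
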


\begin{proof}
Clearly, $g_l(0,t)=\partial_x g_l(0,t)=0$ for every $t\in\RR$.
Let us first prove that \GU holds.
Concerning the behavior of $g_l$ as $t\to -\infty$, we can find a constant $C>0$ such that
$$
g_l(x,t) \leq C \eu^{\alpha_l(q-1)t} \quad \text{and} \quad
|\partial_t g_l(x,t)| \leq C \eu^{\alpha_l(q-2)t} \,,
$$
for every $t\in\RR$ and $x\in\RR$. Indeed
$$
|g_l(x,t)| = h(\eu^t) \eu^{\delta t} \bar f(x \eu^{-\alpha_l t}) \eu^{\alpha_l(l-1)t} = h(\eu^t)  \bar f(x \eu^{-\alpha_l t}) \eu^{\alpha_l(q-1)t}
$$
and
\begin{multline*}
\partial_t g_l(x,t) = [h'(\eu^t)\eu^t +\alpha_l(q-1) h(\eu^t)] \bar f(x \eu^{-\alpha_l t}) \eu^{\alpha_l(q-1)t} \\ + h(\eu^t) x\bar f'(x \eu^{-\alpha_l t})\eu^{\alpha_l(q-2)t}
\end{multline*}
hold when $g_l(x,t) \neq 0$.
Hence, using \eqref{maxf}, we see that \GU holds and we get the existence of the unstable manifold $W^u_l(\tau)$ for every $\tau\in\RR$.

In order to prove the validity of \GS, let us now consider the limit as $t\to +\infty$.
We have
$$
g_l(x,t) = h(\eu^t)  x|x|^{q-2} \bar b(x \eu^{-\alpha_l t})\,, \qquad \text{when } g_l(x,t)\neq 0\,,
$$
where $\bar f(u) = u|u|^{q-2} \bar b(u)$. One has $\lim_{t\to+\infty} g_l(x,t) = h_\infty x|x|^{q-2} b(0)$, uniformly on compact set.
Moreover, 
\begin{multline*}
\eu^{\varpi t} \partial_t g_l(x,t) = h'(\eu^t)\eu^{(1+\varpi)t} x|x|^{q-2} b(x \eu^{-\alpha_l t}) \\ 
- \alpha_l h(\eu^t) x|x|^{q-2} b'(x \eu^{-\alpha_l t}) x \eu^{(\varpi-\alpha_l)t}\,, \qquad  \text{when } g_l(x,t)\neq 0
\end{multline*}
where $\varpi$ is given by \eqref{howisk}, suitably reduced in order to guarantee that  $\varpi<\alpha_l$.
Then \GS follows.
\end{proof}

\medbreak

Being $f(-d^-)=f(d^+)=0$, we have the existence of the constant solutions $u\equiv d^+$ and $u\equiv -d^-$ which correspond respectively to the trajectories
$\Eqp(t)=\xl(t,0,\bs{S_0^+})=(d^+ \, \eu^{\alpha_l t},0)$ and
$\Eqm(t)=\xl(t,0,\bs{S_0^-})=(-d^- \, \eu^{\alpha_l t},0)$.
In particular $\Eqp(t)\in W^{u,+}_l(t)$ and $\Eqm(t)\in W^{u,-}_l(t)$ for every $t\in \RR$.

By \eqref{howisf}, we have
\begin{equation}\label{unpogiu}
\begin{array}{l}
u'(r,d)<0 \text{ for $r$ small  if } d\in(0,d^+)\,,\\
u'(r,-d)>0 \text{ for $r$ small  if } d\in(0, d^-)\,.
\end{array}
\end{equation}
We consider the switched polar coordinates $\Omega^{u,\pm}_{l}(d, \tau)=( \theta^{u,\pm}_l(d, \tau) , \rho^{u,\pm}_l(d, \tau))$ of  $W^{u,+}_l(\tau)$, introduced as in \eqref{sigmap}, defined in the stripe $\stripe = \RR \times [0,+\infty)$. We introduce the sets
\begin{eqnarray*}
F^+(\tau)= \left\{ \Omega^{u,+}_{l}(d, \tau) \,:\, d\geq 0 \right\}
\,, \qquad
F^-(\tau)= \left\{ \Omega^{u,-}_{l}(d, \tau) \,:\, d\geq 0 \right\}\,,
\end{eqnarray*}
which are paths 
in the stripe $\stripe$.
Moreover
\begin{equation}\label{flow.x.axis}
x_l(t) \dot y_l(t) < 0 \text{ whenever } x_l(t)\in (-d^- \eu^{\alpha_l t}, d^+ \eu^{\alpha_l t})\setminus\{0\} \text{ and } y_l(t)= 0\,,
\end{equation} 
so that by \eqref{unpogiu}
we necessarily have
\begin{equation}\label{thu}
\theta^{u,+}_l(d, \tau) \leq 0 \text{ and } \theta^{u,-}_l(d, \tau) \leq -\pi \text{ for every } d>0 \text{ and } \tau\in\RR\,.
\end{equation}
Using \eqref{starting}, we can draw the sets corresponding to $W^u_l(\tau)$ for every $\tau\in\RR$ as follows.

\begin{proposition}\label{locateWu}
For every $\tau\in\RR$,
$F^+(\tau)$ connects the point $(0,0)$ to $\SEqp(\tau)=(0,d^+ \,\eu^{\alpha_l t})$, respectively
$F^-(\tau)$ connects  $(-\pi,0)$ to $\SEqm(\tau)=(-\pi,d^-\,\eu^{\alpha_l t})$ (see Figure \ref{figint}).
\end{proposition}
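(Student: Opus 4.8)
The plan is to prove the assertion for $F^+(\tau)$; the statement for $F^-(\tau)$ follows from the symmetric construction (replace $d^+$ by $-d^-$, use the normalization $\theta^{u,-}_l(0,\tau)=-\pi$ from \eqref{starting} and the sign information $u'(r,-d)>0$ from \eqref{unpogiu}, so that the relevant trajectories lie in $\{x<0,\,y>0\}$). Recall that $F^+(\tau)$ is the image of $d\mapsto\Omega^{u,+}_{l}(d,\tau)=\big(\theta^{u,+}_l(d,\tau),\rho^{u,+}_l(d,\tau)\big)$. First I would check that this map is continuous on $[0,+\infty)$: on $(0,+\infty)$ this is inherited from the continuity of $\Sigma^{u,+}_l(\cdot,\tau)$ once one notes that $\Sigma^{u,+}_l(d,\tau)\neq\O$ for $d>0$ (otherwise uniqueness for \eqref{plap.rad} would force $u(\cdot,d)\equiv0$, against $u(0,d)=d>0$), so that the angular coordinate lifts continuously; at $d=0$ the value is $(0,0)$, thanks to the normalization $\theta^{u,+}_l(0,\tau)=0$ together with $\rho^{u,+}_l(d,\tau)\to0$ as $d\to0^+$ (the latter since $u(r,d)\to0$ uniformly on $[0,\eu^\tau]$, hence $\Sigma^{u,+}_l(d,\tau)\to\O$).

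Next I would locate the image of $d=d^+$. Since $f(d^+)=0$, the constant $u\equiv d^+$ solves \eqref{plap.rad}; being a regular solution with $u(0)=d^+$ it is $u(\cdot,d^+)$, with trajectory $\Eqp(t)=(d^+\eu^{\alpha_l t},0)\in W^{u,+}_l(t)$ (as recorded just before the statement). Hence $\Sigma^{u,+}_l(d^+,\tau)=\Eqp(\tau)$, so $\rho^{u,+}_l(d^+,\tau)=d^+\eu^{\alpha_l\tau}$ and $\theta^{u,+}_l(d^+,\tau)\in 2\pi\mathbb{Z}$, the point lying on the positive $x$-semiaxis.

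The crux is to exclude spurious full turns and conclude $\theta^{u,+}_l(d^+,\tau)=0$. For $d<d^+$ close to $d^+$, continuous dependence on the datum keeps $u(r,d)$ in $(0,d^+)$ on the compact interval $[0,\eu^\tau]$ (it stays near $d^+$, hence positive, and it is decreasing as long as it takes values in $(0,d^+)$, since there $f>0$); consequently $r\mapsto r^{n-1}u'(r,d)|u'(r,d)|^{p-2}$ is strictly decreasing and vanishes as $r\to0^+$, so it is negative on $(0,\eu^\tau]$ and $u'(r,d)<0$ there — the same mechanism behind \eqref{unpogiu} and \eqref{flow.x.axis}. Thus the corresponding trajectory stays in the open fourth quadrant $\{x>0,\,y<0\}$ for all $t\leq\tau$, so $\theta^{u,+}_l(d,\tau)\in(-\pi/2,0)$; letting $d\to d^+$ and using the continuity from the first step, $\theta^{u,+}_l(d^+,\tau)\in[-\pi/2,0]\cap 2\pi\mathbb{Z}=\{0\}$, i.e.\ $\Omega^{u,+}_l(d^+,\tau)=(0,d^+\eu^{\alpha_l\tau})=\SEqp(\tau)$. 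Combining the three steps, the restriction of $\Omega^{u,+}_l(\cdot,\tau)$ to $[0,d^+]$ is a continuous path from $(0,0)$ to $\SEqp(\tau)$, lying in $\{\theta\leq0\}$ by \eqref{thu}, which is the claim.

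I expect the only genuinely delicate point to be this last one --- pinning down the angular coordinate at $d=d^+$, i.e.\ ruling out extra windings --- handled by the local analysis near the constant solution via continuous dependence and the positivity of $f$ close to $d^+$; everything else is bookkeeping with the parametrizations and normalizations fixed in \eqref{sigmap}--\eqref{starting} and with the properties of $W^{u,\pm}_l(\tau)$ recalled above.
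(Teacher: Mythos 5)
Your overall strategy coincides with the one the paper intends: the Proposition is read off from the normalization \eqref{starting}, the membership of the constant trajectory $\Eqp(\tau)$ in $W^{u,+}_l(\tau)$, the sign information \eqref{unpogiu} and \eqref{flow.x.axis}, and the continuity of the parametrization $\Sigma^{u,+}_l$; the paper leaves almost all of this implicit and you fill in the bookkeeping correctly, including the correct identification of the only delicate point (the value of the angular lift at $d=d^+$).

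The one inference that does not stand as written is the step ``the trajectory stays in the open fourth quadrant for all $t\le\tau$, hence $\theta^{u,+}_l(d,\tau)\in(-\pi/2,0)$''. By \eqref{sigmap}--\eqref{starting}, $\theta^{u,+}_l(\cdot,\tau)$ is the continuous angular lift along the curve $d\mapsto\Sigma^{u,+}_l(d,\tau)$, whereas your hypothesis concerns the curve $t\mapsto\xl(t,\tau,\Q)$. Knowing that the single point $\Sigma^{u,+}_l(d,\tau)$ lies in the open fourth quadrant only yields $\theta^{u,+}_l(d,\tau)\in(-\pi/2-2\pi k,\,-2\pi k)$ for some integer $k\ge0$, and you cannot control $k$ by tracking the lift in $d$ from $0$ up to $d$ near $d^+$: for intermediate values of $d$ the solution $u(\cdot,d)$ may vanish before $r=\eu^\tau$, and indeed for $\tau$ large the curve $F^+(\tau)$ is designed to wind many times (this is the whole point of Lemma \ref{lemint}). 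The bridge you need is exactly Lemma \ref{anglecontrol}: the angle performed by $W^{u,+}_l(\tau)$ between the parameters $0$ and $d$ equals the angle performed by the trajectory $\xl(\cdot,\tau,\Q)$ on $(-\infty,\tau]$, and the latter is confined to $[-\pi/2,0]$ when the trajectory never leaves the closed fourth quadrant (a continuous lift starting at $0$ cannot reach $-2\pi$ without passing through $-\pi$). Once you invoke that lemma your argument closes; in fact it then becomes shorter to apply it directly at $d=d^+$ itself: the trajectory $\Eqp(t)=(d^+\eu^{\alpha_l t},0)$ never leaves the positive $x$-semiaxis, so it performs zero angle on $(-\infty,\tau]$ and $\theta^{u,+}_l(d^+,\tau)=0$ at once, with no need for the limiting argument $d\to d^+$ nor for continuous dependence near the constant solution. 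The same remark applies verbatim to the branch $F^-(\tau)$.
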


Arguing as above we can consider some curves associated to the sets $W^{s,+}_l(\tau)$ and $W^{s,-}_l(\tau)$. So, we denote by
\begin{eqnarray*}
\Omega^{s}_{l,2j}(L, \tau)=\big( \theta^{s,+}_l(L, \tau) - 2\pi j , \rho^{s,+}_l(L, \tau)\big)\,,\\
\Omega^{s}_{l,2j+1}(L, \tau)=\big( \theta^{s,-}_l(L, \tau) - 2\pi j , \rho^{s,-}_l(L, \tau)\big)\,,
\end{eqnarray*}
where $\Omega^{s}_{l,0}(\cdot, \tau)$ and $\Omega^{s}_{l,1}(\cdot, \tau)$ are respectively the natural representation of $W^{s,+}_l(\tau)$ and $W^{s,-}_l(\tau)$ on the stripe $\stripe$, by the choice \eqref{starting}. The others are simply their traslations of an angle $\Delta\theta=-2j\pi$.

By \GS with $l>p^*$, $W^s_l(+\infty)$ has the shape of a double spiral.
Therefore, for every $N\in\N$, there exists a compact set $\mathcal E \subset W^s_l(+\infty)$, containing the origin, such that both the branches $\mathcal E \cap W^{s,+}_l(+\infty)$ and $\mathcal E \cap W^{s,-}_l(+\infty)$ perform more than $N+1$ complete rotations in the plane. By Remark \ref{allinfinitoWS}, the unstable manifold $W^s_l(\tau)$ exists for any $\tau\in\RR$ and converges to $W^s_l(+\infty)$, as $\tau\to+\infty$. Therefore, $W^s_l(\tau)$ perform at least $N+1$ rotations for $\tau$ sufficiently large.
As a consequence we have the following remark.


\begin{remark}
For every integer $N$ we can find a time $\bar\tau_N$ with the following property:
\begin{equation}\label{Lpm}
\begin{array}{l}
\text{for every $\tau>\bar\tau_N$ there exist $L^+(N,\tau)>0$ and $L^-(N,\tau)>0$}\\
\qquad \text{such that $\theta^{s,\pm}_l(L^\pm(N,\tau), \tau)= 2\pi N + \pi/2$}
\end{array}
\end{equation}
(we assume that $L^\pm(N,\tau)$ is the minimum value with such a property). Moreover there exists $\chi_N>0$ such that
$$
\rho^{s,\pm}_l(L, \tau)< \chi_N\,, \quad \text{for every $L<L^\pm(N,\tau)$ and $\tau>\bar\tau_N$.}
$$
\end{remark}

Further, since
\begin{equation}\label{flow.y.axis}
\dot x_l(t) y_l(t) > 0 \text{ whenever } x_l(t)=0 \text{ and } y_l(t)\neq 0\,,
\end{equation}
we have
\begin{equation}\label{ths}
\theta^{s,+}_l(L, \tau)>-{\pi}/{2} \text{ and } \theta^{s,-}_l(L, \tau)>-3\pi/2 \text{ for every } L>0 \text{ and }\tau\in\RR \,.
\end{equation}

Let us choose $\tau_N\geq \bar\tau_N$ large enough to have
\begin{equation}\label{tauNlarge}
d^\pm \eu^{\alpha_l \tau_N} > \chi_N \,.
\end{equation}
We fix an integer $N$ and define 
$$
\begin{array}{rl}
E_{2j}(\tau) &= \left\{ \Omega^{s}_{l,2j}(L, \tau) \,:\, 0\leq L\leq L^+(N,\tau) \right\}\,,\\[2mm]
E_{2j+1}(\tau) &= \left\{ \Omega^{s}_{l,2j+1}(L, \tau) \,:\, 0\leq L\leq L^-(N,\tau) \right\}\,,
\end{array}
$$
for every $\tau>\tau_N$, corresponding to subsets of $W_l^s(\tau)$. The previous reasoning provide the following conclusion.

\begin{proposition}\label{locateWs}
For every integer $0\leq j \leq N$, the paths $E_{2j}(\tau)$ and $E_{2j+1}(\tau)$
intersect the line $\theta=\pi/2$ for every $\tau>\tau_N$.
Moreover, $E_k(\tau)\subset \RR \times [0,\chi_N]$ for every integer $k\in[0,2N+1]$ (see Figure \ref{figint}).
\end{proposition}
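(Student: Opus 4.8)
The plan is to prove the two assertions separately; both reduce to the continuity of the parametrizations $\Sigma^{s,\pm}_l(\cdot,\tau)$ — hence of their polar coordinates $\theta^{s,\pm}_l(\cdot,\tau)$ and $\rho^{s,\pm}_l(\cdot,\tau)$ — combined with the normalizations \eqref{starting}, the Remark containing \eqref{Lpm}, and \eqref{ths}, via the intermediate value theorem.

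First I would establish the intersection with $\theta=\pi/2$. Fix $\tau>\tau_N$ and an integer $0\le j\le N$. The path $E_{2j}(\tau)$ is the image of the interval $[0,L^+(N,\tau)]$ under the continuous map $L\mapsto\Omega^s_{l,2j}(L,\tau)=(\theta^{s,+}_l(L,\tau)-2\pi j,\rho^{s,+}_l(L,\tau))$. Its angular component is continuous and, by \eqref{starting}, equals $-\pi/2-2\pi j\le-\pi/2<\pi/2$ at $L=0$; by the defining property \eqref{Lpm} of $L^+(N,\tau)$ it equals $2\pi N+\pi/2-2\pi j=2\pi(N-j)+\pi/2\ge\pi/2$ at $L=L^+(N,\tau)$, the last inequality using $j\le N$. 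The intermediate value theorem then produces some $L\in(0,L^+(N,\tau)]$ with $\theta^{s,+}_l(L,\tau)-2\pi j=\pi/2$, i.e. a point of $E_{2j}(\tau)$ on the line $\theta=\pi/2$. The same reasoning applies to $E_{2j+1}(\tau)$, whose angular component $\theta^{s,-}_l(L,\tau)-2\pi j$ equals $-3\pi/2-2\pi j<\pi/2$ at $L=0$ and $2\pi(N-j)+\pi/2\ge\pi/2$ at $L=L^-(N,\tau)$; hence a crossing exists there too.

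Next I would prove the uniform radial bound. For an integer $0\le j\le N$, the radial component of $\Omega^s_{l,2j}(L,\tau)$ is $\rho^{s,+}_l(L,\tau)$, which is nonnegative since the switched polar coordinates live in $\stripe=\RR\times[0,+\infty)$, and which satisfies $\rho^{s,+}_l(L,\tau)<\chi_N$ for every $L<L^+(N,\tau)$ by the Remark containing \eqref{Lpm} (recall $\tau>\tau_N\ge\bar\tau_N$). Letting $L\to L^+(N,\tau)^-$ and using continuity of $\rho^{s,+}_l(\cdot,\tau)$, the bound $\rho^{s,+}_l(\cdot,\tau)\le\chi_N$ extends to the endpoint, so $E_{2j}(\tau)\subset\RR\times[0,\chi_N]$; identically $E_{2j+1}(\tau)\subset\RR\times[0,\chi_N]$, and together these cover all integers $k\in[0,2N+1]$.

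The argument is essentially bookkeeping with the angular ranges; the only point needing a touch of care is the inclusion of the endpoint $L=L^\pm(N,\tau)$ in the radial estimate, which is why I pass to the one-sided limit and invoke continuity of the parametrization rather than quoting the strict inequality directly. I do not expect any genuine obstacle.
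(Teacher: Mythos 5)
Your argument is correct and follows essentially the same route as the paper, which deduces the proposition directly from the rotation property \eqref{Lpm}, the normalization \eqref{starting}, and the radial bound $\rho^{s,\pm}_l<\chi_N$ stated in the remark containing \eqref{Lpm}; you merely make the intermediate value step and the endpoint limit explicit, which the paper leaves implicit ("the previous reasoning provides the following conclusion").
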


From \eqref{tauNlarge}, we have $\SEqpm(\tau)\notin \RR \times [0,\chi_N]$, for every $\tau>\tau_N$.
Hence, from Propositions \ref{locateWu} and \ref{locateWs}, we expect to find intersections between $F^\pm(\tau)$ and $E_k(\tau)$, cf. Figure \ref{figint}: the next section enters in such details.

\begin{figure}[t]
\centerline{\epsfig{file=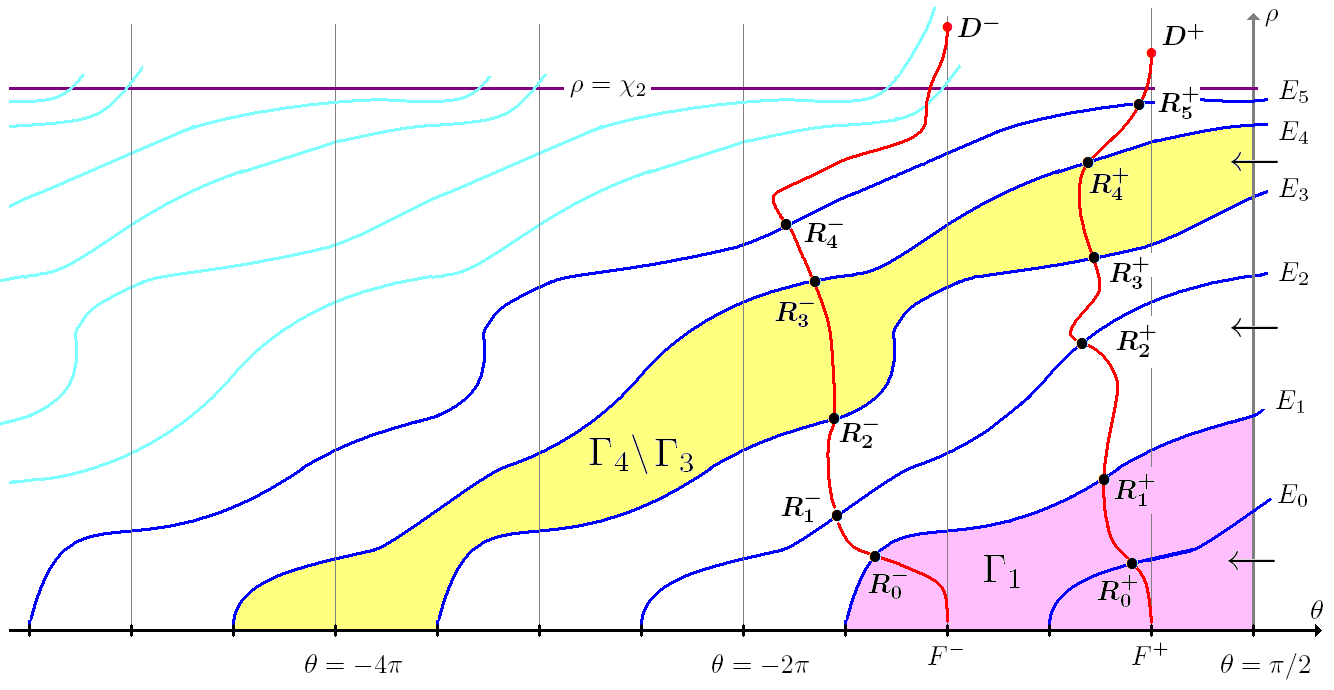, width = 12 cm}}
\caption{The figure shows, in the case $N=2$, the intersections $\bs{R^\pm_k}$ between $F^{\pm}(\tau)$ and $E_k(\tau)$, when $\tau>\tau_{N=2}$. The regions $\Gamma_1(\tau)$ and $\Gamma_4(\tau) \setminus \Gamma_3(\tau)$ have been colored.
 We have dropped the ``$(\tau)$'' for major clarity.}
\label{figint}
\end{figure}

\subsection{The existence of regular fast decay solutions}\label{step2}

Let us fix a positive integer $N$ and consider a time $\tau>\tau_N$, where $\tau_N$ is given by \eqref{tauNlarge}.
We are going to prove, the existence of intersections between $F^\pm(\tau)$ and $E_k(\tau)$ for $0\leq k \leq 2N+1$.

For every integer $k\in[0,2N+1]$, denote by $\Gamma_k(\tau)$ the region enclosed between $E_k(\tau)$, $\theta=\pi/2$ and $\rho=0$ (see Figure \ref{figint}).
Being $W^{s,+}_l(\tau) \cap W^{s,-}_l(\tau)=\{\O\}$, the paths $E_k(\tau)$ do not intersect each other. In particular, we have
\begin{equation}\label{Gamma-boxed}
\Gamma_0(\tau) \subset
\Gamma_1(\tau) \subset
\cdots \subset
\Gamma_{k-1}(\tau) \subset
\Gamma_{k}(\tau) \subset \cdots \,.
\end{equation}
Notice that the first part of the path $F^+(\tau)$ is contained in $\Gamma_k(\tau)$. Since $\SEqp(\tau)\notin \Gamma_k(\tau)$, $F^+(\tau)$ must leave $\Gamma_k(\tau)$ at a certain point. A similar reasoning can be done for $F^-(\tau)$: indeed, it starts inside $\Gamma_k(\tau)$ (except the case $F^-(\tau)\cap \Gamma_0(\tau)=\varnothing$) and $\SEqm(\tau)\in F^-(\tau)$ is such that $\SEqm(\tau)\notin \Gamma_k(\tau)$ (see Figure \ref{figint}). We denote by $\bs{R_k^\pm}(\tau)$ the first intersection (in the sense of the parameter~$d$) between the paths $F^\pm(\tau)$ and $E_k(\tau)$.
More precisely, we get the following lemma.

\begin{lemma}\label{lemint}
For every integer $N$, we can find constants $A_0,A_1,\ldots,A_{2N+1}$ and $B_0,B_1,\ldots,B_{2N}$ such that, for every $\tau> \tau_N$,
$$
\begin{array}{l}
\bs{R_{k}^+}(\tau) = \Omega^{u,+}_{l}(A_{k}, \tau) \in F^+(\tau) \cap E_k(\tau)\,, 
\\
\bs{R_{k}^-}(\tau) = \Omega^{u,-}_{l}(B_{k}, \tau) \in F^-(\tau) \cap E_{k+1}(\tau)\,, 
\end{array}
$$
(see Figure \ref{figint}).
We assume without loss of generality that they are the smallest positive constants with such a property (i.e. $F^\pm(\tau)$ exits from $\Gamma_k(\tau)$ for the first time at these points).
Correspondingly, $u(r,A_k)$ and $u(r,-B_k)$ are regular fast decay solutions.
\end{lemma}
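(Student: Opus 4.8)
The plan is to exploit the topological setup built in Propositions \ref{locateWu} and \ref{locateWs}. Fix the integer $N$ and a time $\tau>\tau_N$. First I would record the starting positions: by \eqref{starting} and Proposition \ref{locateWu}, $F^+(\tau)$ is a path in the stripe $\stripe$ from $(0,0)$ to $\SEqp(\tau)=(0,d^+\eu^{\alpha_l\tau})$, and by \eqref{thu} it lives in $\{\theta\le 0\}$; similarly $F^-(\tau)$ runs from $(-\pi,0)$ to $\SEqm(\tau)=(-\pi,d^-\eu^{\alpha_l\tau})$ inside $\{\theta\le-\pi\}$. On the other side, by Proposition \ref{locateWs} and \eqref{ths}, each $E_k(\tau)$ is a path contained in $\RR\times[0,\chi_N]$, starting at $\Omega^s_{l,k}(0,\tau)$ (which is $(-\pi/2,0)$, $(-3\pi/2,0)$, $(-\pi/2-2\pi,0)$, $\dots$, i.e. angle $-\pi/2-k\pi$ up to even/odd bookkeeping) and ending on the line $\theta=\pi/2$ (more precisely on the translate $\theta=\pi/2-2\pi j$ appropriate to the index, but after shifting all pictures to a common strip one reads off the separating curves). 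The key geometric fact is the nesting \eqref{Gamma-boxed}: since $W^{s,+}_l(\tau)\cap W^{s,-}_l(\tau)=\{\O\}$ the paths $E_k(\tau)$ are pairwise disjoint, so the regions $\Gamma_k(\tau)$ they cut off (bounded by $E_k(\tau)$, the line $\theta=\pi/2$, and $\rho=0$) are totally ordered by inclusion.

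Next I would argue the forced exit. The initial portion of $F^+(\tau)$ leaves $(0,0)$ into the region $\Gamma_k(\tau)$ (because near the origin $F^+$ lies in $\{\theta\le 0\}$, below the line $\theta=\pi/2$, and inside the innermost spiral loop, hence inside every $\Gamma_k$ by \eqref{Gamma-boxed}), whereas its endpoint $\SEqp(\tau)$ satisfies $\SEqp(\tau)\notin\RR\times[0,\chi_N]\supset E_k(\tau)$ by \eqref{tauNlarge}, so $\SEqp(\tau)\notin\Gamma_k(\tau)$. By connectedness of the path $F^+(\tau)$, it must cross the boundary of $\Gamma_k(\tau)$; since it cannot cross the segment $\rho=0$ (that is only the origin) and, by \eqref{ths} versus \eqref{thu}, $F^+(\tau)$ stays in $\theta\le 0<\pi/2$ so it cannot exit through the side $\theta=\pi/2$ either, the only possibility is that it meets $E_k(\tau)$ itself. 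Let $\bs{R_k^+}(\tau)$ be the first such crossing in the parameter $d$, and set $A_k=A_k(\tau)$ to be the corresponding value of $d$, so that $\bs{R_k^+}(\tau)=\Omega^{u,+}_l(A_k,\tau)\in F^+(\tau)\cap E_k(\tau)$. The same argument applied to $F^-(\tau)$, now comparing with $\Gamma_{k+1}(\tau)$ (the shift by $\pi$ in the starting angle is exactly why the ``$-$'' branch is matched to the odd-shifted piece $E_{k+1}$), produces $\bs{R_k^-}(\tau)=\Omega^{u,-}_l(B_k,\tau)\in F^-(\tau)\cap E_{k+1}(\tau)$, taking $B_k$ minimal. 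Once the intersection point lies in $W^s_l(\tau)$ as well as $W^u_l(\tau)$, Remark \ref{corr-non-aut} says the corresponding solutions $u(r,A_k)$ and $u(r,-B_k)$ are exactly the regular fast decay solutions, which is the last assertion.

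The remaining, and I expect the most delicate, point is that the constants $A_k,B_k$ can be chosen \emph{independent of $\tau$} for all $\tau>\tau_N$. This is where Remarks \ref{allinfinitoWS} and \ref{allinfinitoWU} enter: the curves $W^u_l(\tau)$ and $W^s_l(\tau)$ vary continuously (as smoothly as $g_l$) with $\tau$, and the parametrizations $\Sigma^{u,\pm}_l(\cdot,\tau)$, $\Sigma^{s,\pm}_l(\cdot,\tau)$ are continuous in $(d,\tau)$ by construction. I would argue that the value $A_k(\tau)$, being defined as the first exit parameter of $F^+(\tau)$ from $\Gamma_k(\tau)$, is actually constant in $\tau$ because of the invariance built into the Fowler transformation: a regular fast decay solution $u(r,d)$ of \eqref{plap.rad} corresponds, via \eqref{FowT}, to one trajectory of \eqref{sist} that lies in $W^u_l(t)\cap W^s_l(t)$ for \emph{every} $t$ simultaneously (Remark \ref{corr-non-aut}), so the set of admissible $d$'s does not depend on which time-slice $\tau$ one looks at; what may depend on $\tau$ is only \emph{which} intersection is the ``first'' one, but since $E_k(\tau)$ performs at least $N+1-\lceil k/2\rceil$ loops for all $\tau>\tau_N$ the ordering of the first $2N+2$ intersections is stable. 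I would make this precise by noting that, for each fixed $d$, the trajectory is the same object seen in different coordinates, hence $\theta^{u,+}_l(d,\tau)$ increases past each threshold $\tfrac{\pi}{2}-k\pi$ at a $\tau$-independent value of $d$; choosing $A_k$ as that threshold value and invoking the continuity of Remarks \ref{allinfinitoWS}--\ref{allinfinitoWU} to guarantee the crossing is transversal (hence locally constant, thus globally constant for $\tau>\tau_N$) closes the argument. The bookkeeping of the angular shifts $-2\pi j$ and the matching of ``$-$'' branches to odd indices is routine once \eqref{starting}, \eqref{thu}, \eqref{ths} are in hand.
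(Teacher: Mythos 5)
Your proposal is correct and follows essentially the same route as the paper: the nesting \eqref{Gamma-boxed} of the regions $\Gamma_k(\tau)$, the observation that $F^\pm(\tau)$ starts inside $\Gamma_k(\tau)$ but its endpoint $\SEqpm(\tau)$ lies outside by \eqref{tauNlarge}, the exclusion of exits through $\rho=0$ and $\theta=\pi/2$ via \eqref{thu}, and Remark \ref{corrisp}/\ref{corr-non-aut} to identify the intersection points with regular fast decay solutions. The only point you elaborate beyond the paper is the $\tau$-independence of $A_k,B_k$, and there the decisive fact is exactly the flow-invariance you cite (the parameter $d$ labels a trajectory, not a time-slice), not transversality.
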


The last assertion follows as an immediate consequence of Remark \ref{corr-non-aut}. Indeed, 
we can define also the corresponding points in the plane $(x_l,y_l)$:
$\bs{Q_{k}^+}(\tau) = \Sigma^{u,+}_{l}(A_{k}, \tau)$ and 
$\bs{Q_{k}^-}(\tau) = \Sigma^{u,-}_{l}(B_{k}, \tau)$. 
In particular
$\bs{Q_{k}^+}(\tau) \in W^{u,+}_l(\tau)\cap W^s_l(\tau)$ and
$\bs{Q_{k}^-}(\tau) \in W^{u,-}_l(\tau)\cap W^s_l(\tau)$ for every $\tau>\tau_N$,
so that they correspond to homoclinic orbit for system \eqref{sist}.

\medbreak

We have proved, for every positive integer $N$, the existence of intersections between $F^\pm(\tau)$ and $E_k(\tau)$, for $0\leq k \leq 2N+1$, when $\tau>\tau_N$. We stress that the integer $N$ can be chosen arbitrarily large (thus enlarging $\tau_N$ correspondingly), so that the previous constants $A_k$ and $B_k$ can be found for every choice of the integer $k$ as required by Theorem \ref{main}.
 The correct estimate on the number of nondegenerate zeroes will be provided in Section \ref{step4}.

\begin{figure}[t]
\centerline{\epsfig{file=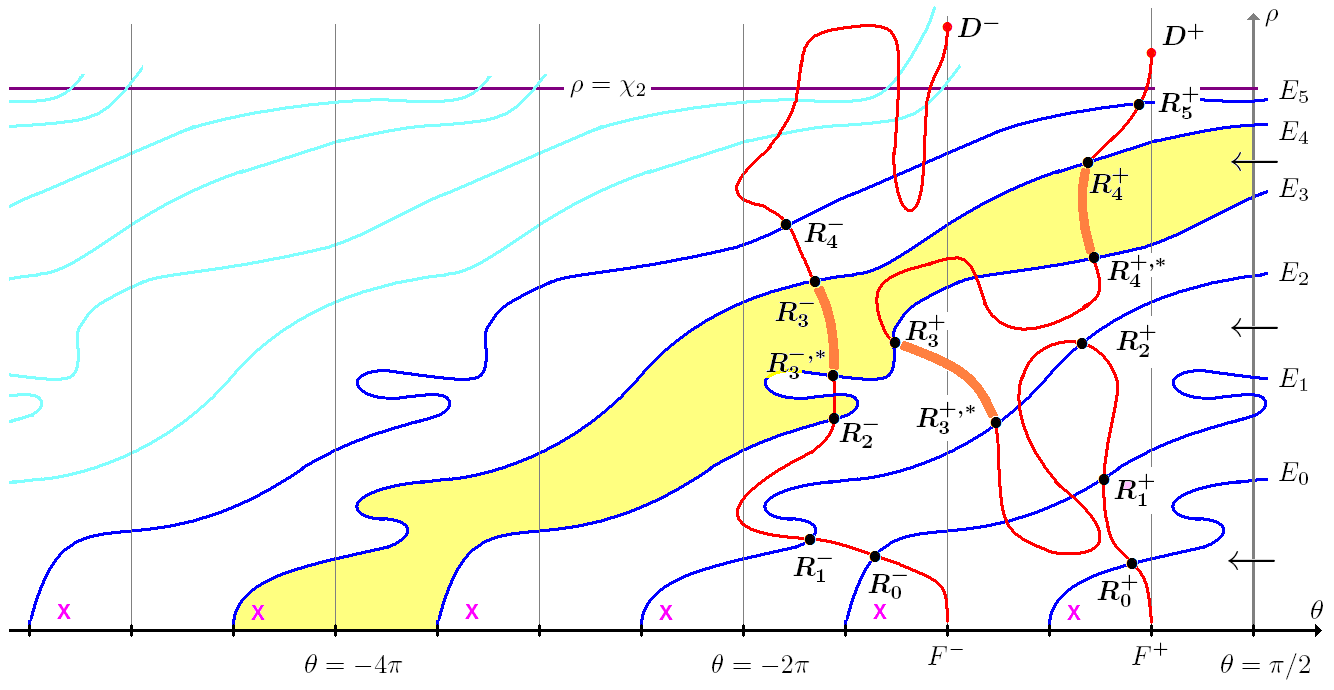, width = 12 cm}}
\caption{The figure shows, in the case $N=2$, the intersections $\bs{R^\pm_k}$ and $\bs{R^{\pm,*}_k}$ between $F^{\pm}(\tau)$ and $E_k(\tau)$, when $\tau>\tau_{N=2}$. Some of the $\bs{R^{\pm,*}_k}$ coincide with $\bs{R^\pm_{k-1}}$. When $\bs{R^{\pm,*}_k}\neq \bs{R^\pm_{k-1}}$ we have thickened the paths
$a_k^\pm(\tau)$ corresponding to slow decay solution.
Condition \eqref{flow.y.axis} provides a control on the corresponding trajectories $\Omega(t,\tau,\bar\Omega)$ which cannot pass the line $\theta=\pi/2$ from the left to the right. Moreover such solutions remain in $\Gamma_{k}(t)\setminus \Gamma_{k-1}(t)$ for every $t>\tau$, and converge to the switched polar coordinates of 
$\bs{P^\pm}$ denoted in the figure by {\sf x}.
We have dropped the ``$(\tau)$'' for major clarity.}
\label{figintdiff}
\end{figure}

\subsection{The existence of regular slow decay solutions}\label{step3}

We focus now our attention on the existence of slow decay solutions following the arguments presented in \cite{DalFra,FraSfe2}.

In the previous section, we have proved that $F^\pm(\tau)$ leaves $\Gamma_k(\tau)$ in some points $\bs{R_{k}^\pm}(\tau)$. In Figure \ref{figint}, a simple situation is pictured: the intersections $F^\pm(\tau) \cap E_k(\tau)$ consist of a unique point. Unfortunately, we cannot prove in general such a uniqueness property. Indeed, a more complex situation can arise: the intersections $F^\pm(\tau)\cap \Gamma_k(\tau)$ can consist of many disconnected paths, as in Figure \ref{figintdiff}. However, from \eqref{Gamma-boxed}, there exists necessarily at least one sub-path of $F^\pm(\tau)$ linking $E_{k-1}(\tau)$ to $E_k(\tau)$.

Hence, once fixed an integer $N$ and $\tau>\tau_N$ as above, for every integer $k\in(0,2N+1]$, we can find $A_{k}^*\in [A_{k-1},A_{k})$ such that
$$
A_k^* := \max \{d\in[0,A_k) \,:\, \text{exists } L >0 \text{ such that } \Omega^{u,+}_l(d,\tau)= \Omega^{s}_{l,k-1}(L,\tau) \} \,.
$$
Set $\bs{R^{+,*}_{k}} := \Omega^{u,+}_l(A_k^*,\tau)$ and
$a_k^+(\tau):=\{ \Omega^{u,+}_{l}(d, \tau) \,:\, d\in(A_{k}^*,A_{k}) \}$. Then $a_k^+(\tau) \subset\Gamma_{k}(\tau) \setminus \Gamma_{k-1}(\tau)$ 
(see Figure \ref{figintdiff}).
Similarly, for every integer $k\in(0,2N]$, we can find $B_{k}^*\in [B_{k-1},B_{k})$ such that
$$
B_k^* := \max \{d\in[0,B_k) \,:\, \text{exists } L >0 \text{ such that } \Omega^{u,-}_l(d,\tau)= \Omega^{s}_{l,k}(L,\tau) \} \,.
$$
Correspondingly, define $\bs{R^{-,*}_{k}} := \Omega^{u,-}_l(B_k^*,\tau)$ and
$a_k^-(\tau):=\{ \Omega^{u,-}_{l}(d, \tau) \,:\, d\in(B_{k}^*,B_{k}) \} \subset\Gamma_{k+1}(\tau) \setminus \Gamma_{k}(\tau)$.

Possibly we can have $A_k^*=A_{k-1}$ or $B_k^*=B_{k-1}$ for some $k$'s (in the simple situation presented in Figure \ref{figint}, they hold for every $k$).

\medbreak

\begin{lemma}\label{lemslow}
For every $d\in(A_k^*,A_k)$, $u(r,d)$ is a regular slow decay solution.
Similarly, for every $d\in(B_k^*,B_k)$, $u(r,-d)$ is a regular slow decay solution.
\end{lemma}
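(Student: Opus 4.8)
The plan is to show that the trajectories corresponding to the parameters $d\in(A_k^*,A_k)$ converge to the equilibrium $\bs{P^+}$ (respectively $\bs{P^-}$) as $t\to+\infty$, and then invoke Remark \ref{corrP} / Remark \ref{corr-non-aut} to conclude that the corresponding $u(r,d)$ is a slow decay solution. I will argue only for the ``$+$'' case, the other being symmetric.

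First I would set up the geometry in the switched polar coordinates on the stripe $\stripe$. Fix $N$, $\tau>\tau_N$, and $k\in(0,2N+1]$. By construction $a_k^+(\tau)=\{\Omega^{u,+}_l(d,\tau)\,:\,d\in(A_k^*,A_k)\}$ lies in the open region $\Gamma_k(\tau)\setminus\overline{\Gamma_{k-1}(\tau)}$, and by the definition of $A_k^*$ and $A_k$ as the relevant extremal parameters, none of these points lies on $E_{k-1}(\tau)$ or on $E_k(\tau)$; they are genuinely strictly between the two spiral arcs. Fix such a $d$ and let $\bar\Omega=\Omega^{u,+}_l(d,\tau)$, with corresponding point $\bar\Q=\Sigma^{u,+}_l(d,\tau)$ in the $(x_l,y_l)$-plane, and let $\Omega(t):=\Omega(t,\tau,\bar\Q)$, $\xl(t):=\xl(t,\tau,\bar\Q)$ denote the forward trajectory. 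The key is a trapping argument: I claim $\Omega(t)$ stays in $\Gamma_k(t)\setminus\overline{\Gamma_{k-1}(t)}$ for all $t>\tau$. The boundary of this region (for each $t$) consists of the arcs $E_{k-1}(t)$ and $E_k(t)$ of the stable manifold together with pieces of the line $\theta=\pi/2$ and the axis $\rho=0$. The trajectory cannot cross $E_{k-1}(t)$ or $E_k(t)$: these are subsets of $W^s_l(t)$, which is invariant (by the characterization \eqref{Ws}), so a crossing would force $\xl$ itself to tend to the origin, i.e. $u(r,d)$ would be a fast decay solution; but then $d$ would equal some $A_j$, contradicting $d\in(A_k^*,A_k)$ and the fact that $A_k$ was chosen as the first exit parameter. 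The trajectory cannot cross the line $\theta=\pi/2$ from the $\theta<\pi/2$ side to the $\theta>\pi/2$ side because of \eqref{flow.y.axis}: on $\{x_l=0,\ y_l\neq0\}$ one has $\dot x_l y_l>0$, which in switched polar coordinates means $\dot\phi_l>0$ is impossible at $\theta=\pi/2$ going rightward — more precisely the flow on that line points back into $\theta<\pi/2$. And it cannot reach $\rho=0$ (the origin is an equilibrium, so by uniqueness no nontrivial trajectory hits it in finite time, and it cannot limit there either since, as shown in Section \ref{sub21}, $H_l$ is strictly decreasing along the flow on $\{H_l\geq0\}$ while $H_l(\O)=0$, so a trajectory not on $W^s_l$ has $H_l<0$ along it and stays away from $\O$). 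Therefore $\xl(t)$ remains in a bounded region of the plane for all $t>\tau$ — bounded because $E_k(t)\subset\RR\times[0,\chi_N]$ is the relevant ``large'' side and the $\theta=\pi/2$ line together with the nesting \eqref{Gamma-boxed} confine it.

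Having established that $\xl(t)$ is bounded and bounded away from the origin for all $t>\tau$, I would pass to the limiting autonomous system: since $g_l$ satisfies \GS, $g_l(x,t)\to g_l^{+\infty}(x)$ and the trajectory is relatively compact, its $\omega$-limit set is a nonempty compact connected invariant set of the limiting autonomous system (${\rm S}_l$) with nonlinearity $g_l^{+\infty}$, contained in the bounded region and avoiding $\O$. By the phase-portrait analysis of Section \ref{sub21} (the case $l>p^*$), the only bounded full trajectories of that autonomous system which avoid the origin are: the equilibria $\bs{P^\pm}$, the stable manifold $M^s=W^s_l(+\infty)$ pieces (but those limit onto $\O$ forward, hence cannot be in an $\omega$-limit set avoiding $\O$ as a whole invariant set — here I use that $H_{p^*}$-type energy forces $H_l<0$ strictly, and Poincaré–Bendixson rules out periodic orbits and homoclinic/heteroclinic cycles since $\alpha_l+\gamma_l\neq0$), and trajectories spiralling into $\bs{P^\pm}$. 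Since our trajectory is confined to the half-plane component $\theta\in(-3\pi/2,\pi/2)$-ish region containing $\bs{P^+}$ (it started in $W^{u,+}$ near $\theta=0$ and never crossed $\theta=\pi/2$), the $\omega$-limit set must be $\{\bs{P^+}\}$. Hence $\lim_{t\to+\infty}\xl(t)=\bs{P^+}$, and by Remark \ref{corr-non-aut} (last sentence) the corresponding $u(r,d)$ is a slow decay solution. Finally I must check it is a \emph{regular} solution: but $\bar\Q\in W^{u,+}_l(\tau)$ by construction, so by Remark \ref{corr-non-aut} the corresponding $u(r,d)$ is regular. The same argument with $\bs{P^-}$ and $W^{u,-}_l$ handles the parameters $-d$, $d\in(B_k^*,B_k)$.

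\textbf{The main obstacle} I anticipate is the rigorous justification of the trapping in the \emph{non-autonomous} setting: the boundary arcs $E_{k-1}(t)$, $E_k(t)$ move with $t$, so ``the trajectory cannot cross them'' needs the invariance of the full stable manifold $\boldsymbol{W^s_l}$ in the extended $(x,y,z)$-space (equivalently \eqref{Ws}) together with a careful statement that a non-fast-decay trajectory starting strictly between the two arcs at time $\tau$ cannot later touch either arc — this is where one really uses the maximality in the definitions of $A_k^*$ and $A_k$ and the continuity/monotone nesting of the $\Gamma_k(t)$'s from \eqref{Gamma-boxed} and Remark \ref{allinfinitoWS}. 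The second delicate point is ensuring the $\omega$-limit set cannot be the ``wrong'' equilibrium or a spiral around the origin; this is controlled by the barrier $\theta=\pi/2$ via \eqref{flow.y.axis}, which prevents the angular coordinate from escaping the sector in which only $\bs{P^+}$ lives. Both points are handled by the same circle of ideas used in \cite{DalFra,FraSfe2}, so I would present them by reduction to those references after stating the needed invariance and confinement facts explicitly.
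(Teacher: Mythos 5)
Your proposal is correct and follows essentially the same route as the paper: trap the trajectory in the moving region between $E_{k-1}(t)$ and $E_k(t)$ using the invariance of the stable-manifold arcs and the barrier $\theta=\pi/2$ from \eqref{flow.y.axis}, then conclude convergence to the nontrivial equilibrium via the limiting autonomous system and invoke Remark \ref{corr-non-aut}. The only slip is that the limit equilibrium alternates with the parity of $k$ (it is $\bs{P^+}$ for $k$ even and $\bs{P^-}$ for $k$ odd, since $\Gamma_k(+\infty)\setminus\Gamma_{k-1}(+\infty)$ contains $\Omega_{\bs{P},k}$), not $\bs{P^+}$ throughout the $(A_k^*,A_k)$ case --- but this does not affect the slow-decay conclusion.
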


\begin{proof}
By Lemma \ref{corr-non-aut}, we need to prove that $\xl(t,\tau,\Q)\to \bs{P^+}$ or $\xl(t,\tau,\Q)\to \bs{P^-}$ as $t\to+\infty$,
where $\Q=\Sigma^{u,+}_l(d,\tau)$.


For every $\Q\in\RR^2$, let us denote by $\Omega(t,\tau,\Omega_\Q)$ the trajectory of $\xl(t,\tau,\Q)$ in the stripe $\stripe$, where $\Omega_\Q=(\phi_\Q,\rho_\Q)\in\stripe$ corresponds to the switched polar coordinates of $\Q$, i.e. $\Q= \rho_\Q (\cos \phi_\Q,\sin \phi_\Q)$.
Notice that
\begin{equation}\label{FEinvariant}
\begin{array}{l}
\Omega_\Q \in E_k(\tau) \Rightarrow \Omega(t,\tau,\Omega_\Q) \in E_k(t) \ \forall t\in\RR \,,\\
 \Omega_\Q \notin E_k(\tau) \Rightarrow \Omega(t,\tau,\Omega_\Q) \notin E_k(t) \ \forall t\in\RR \,, \\
\Omega_\Q \in F^\pm(\tau) \Rightarrow \Omega(t,\tau,\Omega_\Q) \in F^\pm(t) \ \forall t\in\RR \,,\\
 \Omega_\Q \notin F^\pm(\tau) \Rightarrow \Omega(t,\tau,\Omega_\Q) \notin F^\pm(t) \ \forall t\in\RR \,. \\
\end{array}
\end{equation}
Indeed, they correspond to the sets $W^{s,\pm}_l(\tau)$ and $W^{u,\pm}_l(\tau)$, which satisfy the same property, cf. \eqref{Ws} and \eqref{Wu}.

By \eqref{flow.y.axis}, the flow on $\{\theta=\pi/2\}\subset\stripe$ points towards $\{\theta<\pi/2\}$.
Hence, using also \eqref{FEinvariant}, we see that if $\Omega_\Q\in \Gamma_k(\tau) \setminus \Gamma_{k-1}(\tau)$ then
$\Omega(t,\tau,\Omega_\Q)\in \Gamma_k(t) \setminus \Gamma_{k-1}(t)$ for every $t>\tau$.
Moreover, since $W_l^{s}(t) \to W_l^s(+\infty)$, we have that $E_k(t) \to E_k(+\infty)$, where $E_k(+\infty)$ corresponds to a polar representation of a subset of $W^s(+\infty)$. In particular $\Gamma_k(t) \setminus \Gamma_{k-1}(t)$ remains bounded and  $\Gamma_k(t) \setminus \Gamma_{k-1}(t) \to \Gamma_k(+\infty) \setminus \Gamma_{k-1}(+\infty)$, where $\Gamma_j(+\infty)$ is the region enclosed by $E_j(+\infty)$, $\theta=\pi/2$ and $\rho=0$. 

We introduce the polar coordinates of the non-trivial critical points $\bs{P^\pm}= \rho_\pm (\cos\phi_\pm, \sin\phi_\pm)$
and the corresponding points $\Omega_{\bs{P},2j}=(\phi_+ -2\pi j,\rho_+)$ and $\Omega_{\bs{P},2j+1}=(\phi_- -2\pi j,\rho_-)$ on the stripe $\stripe$.
The unique attractor of $\Gamma_k(+\infty) \setminus \Gamma_{k-1}(+\infty)$ is $\Omega_{\bs{P},k}$, so we have $\Omega(t,\tau,\Omega_\Q)\to \Omega_{\bs{P},k}$ as $t\to+\infty$ for every $\Omega_\Q\in \Gamma_k(\tau) \setminus \Gamma_{k-1}(\tau)$.
The previous limits corresponds to $\xl(t,\tau,\Q) \to \bs{P^+}$ for $k$ even and 
to $\xl(t,\tau,\Q) \to \bs{P^-}$ for $k$ odd. Hence, the corresponding solution has a slow decay, cf. Remark \ref{corr-non-aut}.
Choosing $\Omega_\Q=\Omega^{u,\pm}_l(d,\tau) \in a_k^\pm(\tau)$ we find a regular slow decay solution $u(r,\pm d)$.
\end{proof}

As a consequence of the previous argument, we also have the following lemma.

\begin{lemma}\label{pri}
For every $d\in(A_k^*,A_k]$, $u(r,d)$  satisfies $-d^- < u(r,d) <d^+ $ for every $r > \eu^{\tau_N}$.
Similarly, for every $d\in(B_k^*,B_k]$, $u(r,-d)$ satisfies $-d^- < u(r,-d) <d^+ $ for every $r > \eu^{\tau_N}$.
\end{lemma}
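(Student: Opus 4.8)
The plan is to exploit the bound established inside the proof of Lemma \ref{lemslow}, namely that the trajectory $\Omega(t,\tau,\Omega_\Q)$ stays in the bounded region $\Gamma_k(t)\setminus\Gamma_{k-1}(t)$ for all $t>\tau$ whenever $\Omega_\Q\in a_k^+(\tau)$, and to translate this containment back into an $L^\infty$ bound on $u(r,d)$. First I would recall that by Proposition \ref{locateWs} we have $E_j(\tau)\subset\RR\times[0,\chi_N]$ for every $j\in[0,2N+1]$, and hence $\Gamma_k(\tau)\subset\RR\times[0,\chi_N]$; since $\Gamma_k(t)\setminus\Gamma_{k-1}(t)\subset\Gamma_k(t)$ and, as $t\to+\infty$, $\Gamma_k(t)\to\Gamma_k(+\infty)$ which is likewise contained in $\RR\times[0,\chi_N]$ (being the region bounded by $E_k(+\infty)\subset W^s_l(+\infty)$), the whole forward trajectory satisfies $\rho_l(t,\tau,\Q)\le\chi_N$ for all $t\ge\tau$. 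Recalling $\Q=\Sigma^{u,+}_l(d,\tau)$ with $d\in(A_k^*,A_k]$: for $d<A_k$ this is exactly the slow decay case of Lemma \ref{lemslow}, while for $d=A_k$ the point $\bs{Q_k^+}(\tau)=\Sigma^{u,+}_l(A_k,\tau)\in E_k(\tau)$ so $\Omega(t,\tau,\Omega_\Q)\in E_k(t)\subset\Gamma_k(t)$ for all $t$ by \eqref{FEinvariant}; in both cases the radial coordinate stays $\le\chi_N$ for $t>\tau$.

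Next I would convert the bound on $\rho_l$ to a bound on $u$. By \eqref{FowT} we have $x_l(t)=u(\eu^t)\eu^{\alpha_l t}$, so $|u(r,d)|=|x_l(\ln r)|\,r^{-\alpha_l}\le\rho_l(\ln r,\tau,\Q)\,r^{-\alpha_l}\le\chi_N\,r^{-\alpha_l}$ for every $r>\eu^\tau$. Taking $\tau=\tau_N$ (which is legitimate since all the constructions of Sections \ref{step2}--\ref{step3} may be performed with this choice), we get $|u(r,d)|\le\chi_N\,r^{-\alpha_l}<\chi_N\,\eu^{-\alpha_l\tau_N}$ for $r>\eu^{\tau_N}$. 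Now invoke \eqref{tauNlarge}: $d^\pm\eu^{\alpha_l\tau_N}>\chi_N$ means $\chi_N\eu^{-\alpha_l\tau_N}<\min\{d^+,d^-\}$, hence $-d^-<u(r,d)<d^+$ for all $r>\eu^{\tau_N}$, as claimed. The argument for $u(r,-d)$ with $d\in(B_k^*,B_k]$ is identical, using $a_k^-(\tau)$, the set $E_{k+1}$, and the branch $W^{u,-}_l$.

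The only genuinely delicate point is the uniformity of the bound $\rho_l\le\chi_N$ over all $t>\tau$ rather than merely for $t$ large: this relies on the fact, already used in Lemma \ref{lemslow}, that the flow on $\{\theta=\pi/2\}$ points into $\{\theta<\pi/2\}$ (condition \eqref{flow.y.axis}) together with the invariance relations \eqref{FEinvariant}, which together confine $\Omega(t,\tau,\Omega_\Q)$ to $\Gamma_k(t)\setminus\Gamma_{k-1}(t)$ for \emph{every} $t>\tau$ — not just asymptotically — and each such region is a subset of $\RR\times[0,\chi_N]$ by Proposition \ref{locateWs}. Since this containment has essentially been established in the proof of Lemma \ref{lemslow}, the present lemma is a short corollary, and I would phrase it as such.
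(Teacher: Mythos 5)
Your argument is correct and is essentially the paper's own proof: the paper simply observes that for every $\tau>\tau_N$ the relevant arc of $F^\pm(\tau)$ lies in $\RR\times[0,\chi_N]$, so that $\rho^{u,\pm}_l(d,\tau)\leq\chi_N<d^\pm\eu^{\alpha_l\tau}$ and hence $-d^-<x_l(\tau)\eu^{-\alpha_l\tau}<d^+$, which by \eqref{FEinvariant} is the same containment you obtain by flowing forward from a fixed $\tau$ through Lemma \ref{lemslow}. The only cosmetic point is that you should take $\tau\in(\tau_N,\ln r)$ rather than $\tau=\tau_N$ exactly, since the constructions are stated for $\tau>\tau_N$; this changes nothing in the conclusion.
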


\begin{proof}
For every $\tau>\tau_N$ we have $F^\pm(\tau) \subset \RR \times [0,\chi_N]$. So, by \eqref{tauNlarge}, we have $\rho_l^{u,\pm}(d,\tau)< d^\pm \, \eu^{\alpha_l \tau}$. Consequently, $-d^-<x_l(t,\tau,\Sigma_l^{u,\pm}(d,\tau)) \eu^{-\alpha_l t}<d^+$ for every $\tau>\tau_N$ and the assertion follows.
\end{proof}

%

\subsection{The number of non-degenerate zeros}\label{step4}
The correct estimates on the number of nondegenerate zeros of the fast decay solutions is given by the following lemma, see e.g. \cite[Lemma 3.3]{DalFra} (see also \cite{BaFloPino2000POIN,F2013DIE,FraSfe2} for a full fledged proof).

\begin{lemma}\label{anglecontrol}
Let us consider system~\eqref{sist} and assume \GU.
Consider the trajectory $\xl(\cdot ,\tau,\Q)$ with $\Q = \Sigma^{u,+}_{l}(d, \tau)$ and its polar coordinates \eqref{xlpolar}. Then,
the angle $\vartheta:=\theta^{u,+}_l(d,\tau)-\theta^{u,+}_l(0,\tau)=\theta^{u,+}_l(d,\tau)$ performed by
the unstable manifold $W_{l}^{u,+}(\tau)$
equals the angle $\phi:=\phi_l(t,\tau,\Q)-\phi_l(-\infty,\tau,\Q)$
performed by the trajectory $\xl(t,\tau,\Q)$ in the interval $(-\infty,\tau]$.

Similarly, assume \GS and
consider $\xl(\cdot ,\tau,\Q)$ with $\Q = \Sigma^{s,+}_{l}(L, \tau)$. Then,
the angle $\vartheta:=\theta^{s,+}_l(L,\tau)-\theta^{s,+}_l(0,\tau)=\theta^{s,+}_l(L,\tau)+\pi/2$ performed by
the stable manifold $W_{l}^{s,+}(\tau)$
equals, but with reversed sign, the angle $\varphi:=\phi_l(+\infty,\tau,\Q)-\phi_l(\tau,\tau,\Q)$
performed by the trajectory $\xl(t,\tau,\Q)$ in the interval $[\tau,+\infty)$.

A similar reasoning holds for $\Q = \Sigma^{u,-}_{l}(d, \tau)$ and $\Q = \Sigma^{s,-}_{l}(L, \tau)$.
\end{lemma}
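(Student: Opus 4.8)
The claim is a statement about the equality of two angles: the angle swept by the parametrized manifold $W^{u,+}_l(\tau)$ (as the parameter $d$ runs from $0$ to the chosen value, with $\tau$ fixed) and the angle swept by the trajectory $\xl(t,\tau,\Q)$ as time runs backward from $\tau$ to $-\infty$ (with $\Q=\Sigma^{u,+}_l(d,\tau)$ fixed). The plan is to exhibit, for each fixed value $d$, a single homotopy (in fact a single continuous map of a rectangle into $\RR^2\setminus\{\O\}$ pieced together from these two families) whose two boundary arcs are precisely the manifold arc and the trajectory arc, and whose other two boundary arcs wind around the origin a known, controllable amount — namely not at all, in the limit. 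One then reads off the equality of winding from the fact that the total winding around the boundary of the rectangle is zero, since the map avoids the origin.

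Concretely, first I would set up the two-parameter family $\Psi(d,t)=\xl(t,\tau,\Sigma^{u,+}_l(d,\tau))$ for $d\ge 0$ and $t\le \tau$. By the parametrization properties collected before the lemma, $\Psi(d,\tau)=\Sigma^{u,+}_l(d,\tau)$ traces $W^{u,+}_l(\tau)$ as $d$ varies (the ``$t=\tau$'' edge), and $\Psi(d,\cdot)$ with $d$ fixed is the trajectory (a ``$d=\mathrm{const}$'' edge). The $d=0$ edge degenerates to the equilibrium trajectory of the truncated problem: since $g_l(0,t)=0$, the point $\Q=\Sigma^{u,+}_l(0,\tau)$ corresponds to $u\equiv 0$, so this edge is a ray along the positive $x$-axis (recall $\theta^{u,+}_l(0,\tau)=0$) and sweeps zero angle. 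The remaining edge is the ``$t\to-\infty$'' limit: by Remark \ref{corr-non-aut} and the construction of the unstable manifold via hypothesis \GU, each trajectory $\xl(t,\tau,\Q)$ with $\Q\in W^{u,+}_l(\tau)$ converges to the origin as $t\to-\infty$ tangentially to the $x$-axis, and moreover, by Remark \ref{allinfinitoWU}, the curves $W^{u,+}_l(t)$ approach the positive $x$-axis as $t\to-\infty$. Hence, after passing to polar coordinates, $\phi_l(t,\tau,\Q)\to 0$ as $t\to-\infty$ uniformly for $d$ in a compact set, so the ``$t=-\infty$'' edge contributes zero winding as well. Since $\Psi$ never hits $\O$ (the origin is an equilibrium, so no nontrivial trajectory reaches it in finite backward time, and the limit is attained only at $t=-\infty$; for the purposes of the winding argument one truncates at a large negative time $T$ and lets $T\to-\infty$), the net change in the polar angle around the closed boundary of the rectangle $[0,d]\times[T,\tau]$ is zero, which after accounting for orientation gives $\theta^{u,+}_l(d,\tau)=\phi_l(\tau,\tau,\Q)-\phi_l(T,\tau,\Q)$; letting $T\to-\infty$ yields the asserted identity $\vartheta=\phi$.

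For the stable-manifold half, the argument is the mirror image: one uses the family $\Psi(L,t)=\xl(t,\tau,\Sigma^{s,+}_l(L,\tau))$ for $L\ge 0$ and $t\ge\tau$, with the ``$t=+\infty$'' edge now controlled by \GS together with the fact (cf. \eqref{Ws} and the discussion of the autonomous picture) that trajectories on $W^{s,+}_l$ converge to $\O$ tangentially to the $y$-axis; the reversal of sign in the statement is exactly the reversal of the orientation of the ``time'' edge relative to the ``parameter'' edge, together with the base value $\theta^{s,+}_l(0,\tau)=-\pi/2$. The cases $\Sigma^{u,-}_l$ and $\Sigma^{s,-}_l$ are identical up to the shift of base angle by $-\pi$ recorded in \eqref{starting}. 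The only genuinely delicate point — and the one I would treat carefully rather than wave through — is the uniformity of the angular convergence at the far end ($t\to-\infty$ for the unstable case, $t\to+\infty$ for the stable case): this is precisely where the decay hypotheses in \GU and \GS, via the exponential control on $g_l$ and on $\partial_t g_l$, and the convergence $W^{u}_l(t)\to x\text{-axis}$ (resp. $W^s_l(t)\to W^s_l(+\infty)$) from Remarks \ref{allinfinitoWU} and \ref{allinfinitoWS}, are used to rule out any ``hidden'' extra rotation accumulating near the endpoint. Granting that uniformity, the winding-number bookkeeping is routine, and I would simply cite \cite[Lemma 3.3]{DalFra} and \cite{BaFloPino2000POIN,F2013DIE,FraSfe2} for the full details as the statement already anticipates.
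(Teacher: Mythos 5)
Your winding--number/homotopy strategy is exactly the standard proof of this kind of ``angle transfer'' lemma; the paper itself does not prove the statement but only points to \cite[Lemma 3.3]{DalFra} and to \cite{BaFloPino2000POIN,F2013DIE,FraSfe2}, and the argument given there is precisely the one you sketch: a two--parameter family $\Psi(d',t)=\xl(t,\tau,\Sigma^{u,+}_l(d',\tau))$ whose image avoids $\O$, with the two ``far'' edges contributing no winding in the limit, the crucial ingredients being the flow--equivariance of the parametrization ($\xl(T,\tau,\Sigma^{u,+}_l(d',\tau))=\Sigma^{u,+}_l(d',T)$) and the convergence of $W^{u}_l(T)$ to the $x$--axis from Remark \ref{allinfinitoWU} (resp.\ the tangency of the local stable manifold to the $y$--axis for the second half). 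So in substance your proposal matches the intended proof, including the correct explanation of the sign reversal in the stable case.

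One point needs repair. You describe the $d'=0$ edge as ``a ray along the positive $x$-axis'' that ``sweeps zero angle.'' It is not: $\Sigma^{u,+}_l(0,\tau)$ is the origin itself (the trivial solution $u\equiv 0$), so the edge $\{d'=0\}$ is mapped entirely onto $\O$ and the map $\Psi$ on the full rectangle $[0,d]\times[T,\tau]$ does \emph{not} avoid the origin; the convention $\theta^{u,+}_l(0,\tau)=0$ in \eqref{starting} is a limit value, not the angle of an actual point. The fix is the same device you already apply in the $t$--direction: work on $[\epsilon,d]\times[T,\tau]$ and show that the angular variation along the $d'=\epsilon$ trajectory tends to $0$ as $\epsilon\to 0^+$, which follows because for small $d'$ the whole backward orbit $\{\xl(t,\tau,\Sigma^{u,+}_l(d',\tau)):t\le\tau\}$ lies in a small cone around the positive $x$--axis (local unstable manifold theory plus \eqref{unpogiu}/\eqref{thu}). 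With that extra limit inserted, the bookkeeping closes and the argument is complete; the same caveat applies to the $L'=0$ edge in the stable case.
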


The previous lemma is the key point in order to prove the nodal properties of the regular solutions we have found.

Let us start with regular fast decay ones. We consider $u(r,A_k)$ and the associated points $\bs{Q^+_k}=\Sigma^{u,+}_l(A_k,\tau)\in\RR^2$ and $\bs{R^+_k}=\Omega^{u,+}_l(A_k,\tau)\in\stripe$. By the previous lemma, in order to obtain the angle performed by $\xl(\cdot,\tau,\bs{Q^+_k})$ in the whole time interval $(-\infty,+\infty)$ we have to consider the angle variation along the path $F^+(\tau)$ between $(0,0)\in \stripe$ and $\bs{R^+_k}$ and then the one along $E_k(\tau)$ between $\bs{R^+_k}$ and $(-\pi(k+1/2),0)\in\stripe$. We easily obtain a tolal angle of $-\pi(k+1/2)$. So, by the flow condition \eqref{flow.y.axis}, the trajectory $\Omega(\cdot,\tau,\bs{R^+_k})$ intersects the vertical lines $\theta=-\pi/2-j\pi$ once for every integer $j\in\{1,\ldots,k\}$, thus finding exactly $k$ zeros of $\xl(\cdot,\tau,\bs{Q^+_k})$ which are non-degenerate.
A similar reasoning holds for $u(r,-B_k)$.

We turn now to consider $u(r,d)$ with $d\in(A^*_k,A_k)$ and, correspondingly, $\Q=\Sigma^{u,+}_l(d,\tau)$ and $\R=\Omega^{u,+}_l(d,\tau)$.
The total angle performed by the solution $\xl(\cdot,\tau,\Q)$ in the time interval $(-\infty,\tau)$ is given by the angle variation along the path $F^+(\tau)$. Then, for every $t>\tau$, $\Omega(t,\tau,\R)$ is forced to remain between the paths $E_{k-1}(t)$ and $E_k(t)$ and to converge to $\Omega_{\bs{P},k}$ as $t\to+\infty$: thus the total angle variation is 
$-2j\pi+\phi_+\in(-(2j+1)\pi,-2j\pi)$ if $k=2j$, resp $-2j\pi+\phi_-\in(-(2j+2)\pi,-(2j+1)\pi)$ if $k=2j+1$. By \eqref{flow.y.axis}, the trajectory $\Omega(\cdot,\tau,\bs{R})$ intersects all the lines $\theta= -\pi/2 - j\pi$ only once and correspondingly all the zeros are non-degenerate. 
A similar reasoning holds for $u(r,-d)$ with $d\in(B^*_k,B_k)$.

\subsection{Back to the orginal problem}\label{step5}

We have proved Theorem \ref{main} for the differential equation \eqref{plap} with $f$ replaced by its truncation $\bar f$ introduced in \eqref{trunc}.
We are going now to prove that such solutions solve also the original equation \eqref{plap}. 
To this aim we need Lemma \ref{apriori} below.

We underline that it is well-known in literature that, under hypotheses \eqref{howisf} and \eqref{howisk}, a positive solution of \eqref{plap.rad} with $u(0)<d^+$ necessarily satisfies $u(r)<d^+$ for every $r>0$ (and correspondingly negative ones with $u(0)>-d^-$ satisfy $u(r)>-d^-$). 
The situation is more complicated if we treat nodal solutions.
In the case of a decreasing weight $k$, we can easily provide the same {\em a priori} estimate by introducing an energy function. 

For a more general weight $k$, such a situation is not ensured and we argue as follows.


\begin{lemma}\label{apriori}
Consider $\Q=(Q_x,Q_y)\in\RR^2$ with $-d^- < Q_x \eu^{-\alpha_l \sigma} < d^+$ for a certain $\sigma$.
Suppose that there exists $\sigma'>\sigma$ such that $\xl(\sigma',\sigma,\Q)$ satisfies $-d^-<x_l(\sigma',\sigma,\Q) \eu^{-\alpha_l \sigma'} < d^+$.
Then, 
the solution $\xl(t,\sigma,\Q)$ of \eqref{sist}, with $g_l$ as in \eqref{truncg}, satisfies $-d^-<x_l(t,\sigma,\Q) \eu^{-\alpha_l t} < d^+$, for every $t\in(\sigma,\sigma')$.
\end{lemma}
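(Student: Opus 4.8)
The plan is to read the statement back in the original radial variable and to argue by a monotonicity argument for the generalised momentum, combined with a comparison with the constant solutions $u\equiv\pm d^\pm$. First I would let $u(r)$ be the solution of the truncated equation \eqref{plap.rad} (with $\bar f$ in place of $f$) associated to the trajectory $\xl(\cdot,\sigma,\Q)$ through the Fowler change of variables \eqref{FowT}, so that $u(r)=x_l(t,\sigma,\Q)\,\eu^{-\alpha_l t}$ with $r=\eu^t$; the hypotheses then read $u(\eu^\sigma),u(\eu^{\sigma'})\in(-d^-,d^+)$, and I must show $u(r)\in(-d^-,d^+)$ for every $r\in(\eu^\sigma,\eu^{\sigma'})$. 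The workhorse will be $w(r):=r^{n-1}u'(r)\,|u'(r)|^{p-2}$, which is of class $C^1$ and satisfies $w'(r)=-r^{n-1}k(r)\,\bar f(u(r))$, so that its monotonicity on any subinterval is governed by the sign of $\bar f(u)$ there.

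For the upper bound I would proceed by contradiction. If $u$ attains the value $d^+$ somewhere in $(\eu^\sigma,\eu^{\sigma'})$, let $r_1$ be the first such instant; the boundary data force $r_1$ to be interior, with $u<d^+$ on $[\eu^\sigma,r_1)$, hence $u'(r_1)\ge 0$. If $u'(r_1)=0$, then $(u,u')(r_1)=(d^+,0)$ coincides with the value at $r_1$ of the constant solution $u\equiv d^+$ of \eqref{plap.rad} — which is a genuine solution because $b(d^+)=0$ in \eqref{howisf} — so by uniqueness (guaranteed by the standing hypothesis $p\in(1,2]$) one gets $u\equiv d^+$, contradicting $u(\eu^\sigma)<d^+$. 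Hence $u'(r_1)>0$ and $u>d^+$ immediately past $r_1$; since $u(\eu^{\sigma'})<d^+$, there is a first $r_2\in(r_1,\eu^{\sigma'})$ with $u(r_2)=d^+$, so $u>d^+$ on $(r_1,r_2)$ and $u'(r_2)\le 0$. On $(r_1,r_2)$ one has $\bar f(u)\le 0$ — here I use that the truncation \eqref{trunc} makes $\bar f$ a restoring term for $u\ge d^+$ — hence $w'\ge 0$, i.e.\ $w$ is non-decreasing on $[r_1,r_2]$; but $w(r_1)>0$ whereas $w(r_2)\le 0$, a contradiction. The lower bound is entirely symmetric: one uses $\bar f(u)\ge 0$ for $u\le -d^-$ (so $w$ is non-increasing along any excursion below $-d^-$), compares the degenerate crossing with the constant solution $u\equiv -d^-$, and reaches the contradiction $w(r_1)<0\le w(r_2)$.

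The two places requiring care — and which I expect to be the only real obstacles — are: (i) the degenerate crossing $u'(r_1)=0$, where the momentum carries no sign information on one side; this is exactly where uniqueness for the $p$-Laplacian ODE (hence the restriction $p\le 2$) and the fact that $u\equiv\pm d^\pm$ are solutions (hence $b(\pm d^\pm)=0$) come in; and (ii) the sign of the truncated nonlinearity outside $[-d^-,d^+]$: the whole argument rests on $\bar f$ being restoring there, i.e.\ non-positive for $u\ge d^+$ and non-negative for $u\le -d^-$, which is precisely the purpose of the truncation \eqref{trunc}. It is for this reason that the lemma is needed only for a general weight $k$: for a monotone $k$ one disposes of a genuinely monotone energy, from which the same bound follows at once, as recalled just before the statement.
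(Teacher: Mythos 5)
Your argument is correct, and it is essentially the paper's proof in disguise: the paper translates the Fowler plane by the constant trajectory $\Eqp$, setting $\xi_l(t)=x_l(t)-d^+\eu^{\alpha_l t}$, shows that the quadrants $\mathcal Q_1=\{\xi_l\ge 0,\,y_l\ge 0\}$ and $\mathcal Q_2=\{\xi_l\ge 0,\,y_l\le 0\}$ are respectively positively and negatively invariant, and derives the contradiction from $\xi_l(\sigma)<0$, $\xi_l(t_0)\ge 0$, $\xi_l(\sigma')<0$. Since $y_l$ and your momentum $w(r)=r^{n-1}u'|u'|^{p-2}$ have the same sign, the invariance of $\mathcal Q_1$ and $\mathcal Q_2$ is exactly your statement that $w$ is monotone along any excursion with $u\ge d^+$, and the paper's contradiction is your chain $w(r_2)\ge w(r_1)>0\ge w(r_2)$. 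What you make explicit and the paper leaves implicit is the tangential crossing $u'(r_1)=0$: in the phase-plane picture this is the corner $(\xi_l,y_l)=(0,0)$ of the two quadrants, i.e. the trajectory $\Eqp$ itself, and in both versions it is excluded by uniqueness (the reason for the standing restriction $p\in(1,2]$) together with $b(\pm d^\pm)=0$. One small point in your favour: on the negative side your argument uses $\bar f\ge 0$ for $u\le -d^-$, whereas \eqref{trunc} literally prescribes ``non-positive elsewhere''; the sign you use is the one the paper's own ``analogous'' estimate for $d^-$ also needs (so that $\{x_l\le -d^-\eu^{\alpha_l t},\,y_l\le 0\}$ be positively invariant), so your reading of the truncation is the intended one.
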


\begin{proof}
Defining $\xi_l(t) = x_l(t) - d^+ \eu^{\alpha_l t}$ we obtain the system
\begin{equation}\label{sist.xi}
\begin{cases}
\dot \xi_l = \alpha_l \xi_l + y_l|y_l|^{\frac{2-p}{p-1}}\\
\dot y_l = \gamma_l y_l - h_l(t,\xi_l)
\end{cases}
\end{equation}
where $h_l(t,\xi_l)=g_l(t,\xi_l+d^+ \eu^{\alpha_l t})$. Notice that, by \eqref{truncg}, $h_l(t,\xi_l)\leq 0$ if $\xi_l>0$.
Moreover, as in \eqref{flow.y.axis},
\begin{equation}\label{flow.xi}
\dot \xi_l(t) y(t) > 0 \text{ whenever } \xi_l(t)=0 \text{ and } y_l(t)\neq 0\,,
\end{equation} 
In particular $\mathcal Q_1 = \{ (\xi,y) \,:\, \xi \geq 0\,, y\geq 0 \}$ and $\mathcal Q_2 = \{ (\xi,y) \,:\, \xi \geq 0\,, y\leq 0 \}$ are respectively positively and negatively invariant sets.

In this setting we have to prove that if a solution $(\xi_l(t),y_l(t))$ of \eqref{sist.xi} satisfies $\xi_l(\sigma)<0$ and $\xi_l(\sigma')<0$ then $\xi_l(t)<0$ for every $t\in(\sigma,\sigma')$. Arguing by contradiction, let $t_0\in(\sigma,\sigma')$ be such that $\xi_l(t_0)\geq 0$. If $y_l(t_0)\geq 0$ then $(\xi_l(t_0),y_l(t_0))\in \mathcal Q_1$ which is invariant in the future and we get a contradiction with $\xi_l(\sigma')<0$. Conversely, if $y_l(t_0)< 0$ then $(\xi_l(t_0),y_l(t_0))\in \mathcal Q_2$ which is invariant in the past and we get a contradiction with $\xi_l(\sigma)<0$.

The estimates with respect $d^-$ is analogous.
\end{proof}

We consider a regular solution $u(r,d)$ of \eqref{plap.rad} with $f=\bar f$ as in \eqref{trunc} and the corresponding solution $\xl$ of system \eqref{sist} (notice that $g_l$ is as in \eqref{truncg}).

If $d\in(-d^-,d^+)$, by \eqref{unpogiu}, we have $u(r,d)\in(-d^-,d^+)$ for every $r\in[0,\eu^{t_1}]$, with $\eu^{t_1}$ sufficiently small. Hence,
 we have $-d^- <x_l(t) \eu^{-\alpha_l t}<d^+ $ for every $t \leq t_1$.
%

Now, given a regular solution $u(r,d)$ with $d\in(A_k^*,A_k]$, setting $N>k/2$ and $\Q=\Sigma_l^{u,+}(d,\tau)$, by Lemma \ref{pri}, we have 
$-d^-<x_l(t,\tau,\Q) \eu^{-\alpha_l t}<d^+$ for every $t \geq \tau > \tau_N$.

So, we can apply Lemma \ref{apriori} with $\sigma=t_1$ and $\sigma'=\tau>\tau_N$, thus obtaining $-d^-<x_l(t,\tau,\Q) \eu^{-\alpha_l t}<d^+$ for every $t\in[t_1,\tau]$.
Summing up, the previous estimate holds for every $t\in\RR$.

\smallbreak

The same reasoning can be adapted to the case of a regular slow decay solution $u(r,-d)$ with $d\in(B_k^*,B_k]$. 

\smallbreak
 
Hence, the previously found solutions solve indeed the original equation and the proof of Theorem \ref{main} is thus completed.

 



\begin{thebibliography}{OSS2}

\bibitem{AccPuc} B. Acciaio and P. Pucci, {\em Existence of radial solutions for quasilinear elliptic equations with singular nonlinearities}, Adv. Nonlinear. Stud. {\bf 3} (2003), 513--541. 

\bibitem{AlaQua} S. Alarc\'on and A. Quaas, {\em Large number of fast decay ground states to Matukuma-type
equations}, J. Differential Equations {\bf 248} (2010), 866--892.

\bibitem{BaFloPino2000POIN}
R. Bamon, I. Flores and M. del Pino, {\em Ground states of semilinear elliptic equations: a geometric approach},
Ann. Inst. H. Poincar\'e Anal. Non Lin\'eaire {\bf 17}  (2000),  551--581.


\bibitem{BSS} V. Barutello, S. Secchi and E. Serra, {\em A note on the radial solutions for the supercritical H\'enon equation}, J. Math. Anal. Appl. {\bf 341} (2008), 720--728.

\bibitem{BidVer} M.F. Bidaut-V\`eron, {\em Local and global behavior of solutions of quasilinear equations of Emden-Fowler type}, Arch. Ration. Mech. Anal. {\bf 107} (1989) 293--324.

\bibitem{BW} J. Byeon and Z.-Q- Wang, {\em On the H\'enon equation: asymptotic profile of ground states}, Ann. Instit. H. Poincar\'e Anal. Non Lin\'eaire {\bf 23} (2006), 803--828.




%

   \bibitem{CodLev} E. Coddington and N. Levinson, {\em Theory of Ordinary Differential
 Equations}, Mc Graw Hill, New York, 1955.
  
  
  
\bibitem{CDGHM} C. Cort\'azar, J. Dolbeault, M. Garc\'ia-Huidobro and R. Man\'asevich, {\em Existence of sign-changing solutions for an equation with a weighted $p$-laplace operator}, Nonlinear Analysis {\bf 110} (2014), 1--22. 

  %


\bibitem{CGHH2015} C. Cort\'azar, M. Garc\'ia-Huidobro and P. Herreros, {\em Multiplicity results for sign-changing bound state solutions of a semilinear equation}, J. Differential Equations {\bf 259} (2015), 7108--7134. 

\bibitem{CGHY2013} C. Cort\'azar, M. Garc\'ia-Huidobro and C.S. Yarur, {\em On the existence of sign changing bound state solutions
of a quasilinear equation}, J. Differential Equations {\bf 254} (2013), 2603--2625. 


  
  \bibitem{DalFra} 
 F. Dalbono and M. Franca, {\em Nodal solutions for supercritical Laplace equations}, Comm. Math. Phys. 
 {\bf 347} (2016), 875--901.
 
 

\bibitem{DaDu} E.N. Dancer and Y. Du, {\em Some remarks on Liouville type results for quasilinear elliptic equations}, Proc. Amer. Math. Soc. {\bf 131} (2002), 1891--1899.


\bibitem{DolFlo} J. Dolbeault and I. Flores, {\em Geometry of phase space and solutions of semilinear elliptic equations in a ball}, Trans. Amer. Math. Soc. {\bf 359} (2007), 4073--4087. 

\bibitem{FQT} P. Felmer, A. Quaas and M. Tang, {\em On the complex structure of positive solutions to Matukuma-type
equations}, Ann. I. H. Poincar\'e {\bf 26} (2009), 869-887.




\bibitem{Fow31} R.H. Fowler, {\em Further studies of Emden's and similar differential equations}, Quart. J. Math. {\bf 2} (1931), 259--288.






   
 
\bibitem{F2009AMPA}
   M. Franca, {\em Fowler transformation and radial solutions for quasilinear
elliptic equations. Part 2: nonlinearities of mixed type}, Ann.
Mat. Pura Appl. {\bf 189} (2009), 67--94.

\bibitem{F2010JDE}
  M. Franca, {\em Radial ground states and singular ground states for a spatial
dependent $p$-Laplace equation}, J. Differential Equations
\textbf{248} (2010),  2629--2656

  
  
\bibitem{F2013DIE}
 M. Franca, {\em Positive solutions of semilinear elliptic
  equations: a dynamical approach}, Differential Integral Equations  {\bf 26} (2013), 505--554.
























\bibitem{FraSfe1} M. Franca and A. Sfecci, {\em On a diffusion model with absorption and production}, Nonlinear Anal. Real World Appl. {\bf 34} (2017), 41--60.

\bibitem{FraSfe2} M. Franca and A. Sfecci, {\em Entire solutions of superlinear problems with indefinite weights and Hardy potentials}, J. Dynam. Differential Equations (2017), 38 pages, DOI: 10.1007/s10884-017-9589-z.


\bibitem{JSell}
 R. Johnson, {\em Concerning a theorem of Sell}, J. Differential Equations \textbf{30} (1978),
 324--339.
  \bibitem{JPY}
   R. Johnson, X.B. Pan and Y.F. Yi, {\em The Melnikov method and
  elliptic equation with critical exponent}, Indiana Univ. Math. J. \textbf{43} (1994),
  1045--1077.




%

\bibitem{KYY}
   N. Kawano, E. Yanagida and S. Yotsutani, {\em Structure theorems for positive radial solutions to
  $div(|Du|^{m-2}Du)+K(|x|)u^q=0$ in $\mathbb{R}^n$}, J. Math. Soc. Japan \textbf{45}
  (1993),  719-742.

\bibitem{KMPT}
M.K. Kwong, J.B. McLeod, L.A. Peletier and W.C. Troy, {\em On ground state solutions of $-\Delta u = u^p-u^q$}, J. Differential Equations {\bf 95} (1992), 218--239.

\bibitem{NiHenon} W.-M. Ni, {\em A nonlinear Dirichlet problem on the unit ball and its applications}, Indiana Univ. Math. J. {\bf 31} (1982), 801--807.

%

\bibitem{SerTang} J. Serrin and M. Tang, {\em Uniqueness of ground states for quasilinear elliptic equations}, Indiana Univ. Math. J. {\bf 49} (2000) 897--923.






\bibitem{Tang00} M. Tang, {\em Existence and uniqueness of fast decay entire solutions of quasilinear elliptic equations}, J. Differential Equations {\bf 164} (2000), 155--179. 

 \bibitem{Tang01} M. Tang, {\em Uniqueness and global structure of positive radial solutions for quasilinear elliptic equations}, Comm. Partial Differential Equations \textbf{26} (2001), 909--938. 
 
 
\end{thebibliography}
\end{document}